\newcounter{contador}
\newcounter{teoA}
\newtheorem{teoa}[teoA]{Theorem}
\newtheorem{propo}[contador]{Proposition}
\newtheorem{teo}[contador]{Theorem}
\newtheorem{lem}[contador]{Lemma}
\newtheorem{corol}[contador]{Corollary}
\theoremstyle{remark}
\newcounter{ex}
\newcommand{\R}{{\mathbb R}}
\newcommand{\C}{{\mathbb C}}
\newcommand{\N}{{\mathbb N}}
\title{On some rational piecewise linear rotations}
\author{Anna Cima$^{(1)}$, Armengol Gasull$^{(1,2)}$, V\'{\i}ctor Ma\~{n}osa$^{(3)}$ and Francesc Ma\~{n}osas$^{(1)}$
    \\*[.1truecm]
    {\small \textsl{$^{(1)}$ Departament de Matem\`{a}tiques, Facultat
            de Ci\`{e}ncies,}}
    \\*[-.25truecm] {\small \textsl{Universitat Aut\`{o}noma de Barcelona,}}
    \\*[-.25truecm] {\small \textsl{08193 Bellaterra, Barcelona,
    Spain}}
        \\*[-.25truecm] {\small \textsl{anna.cima@uab.cat,
            armengol.gasull@uab.cat, manyosas@mat.uab.cat}}\\
 \\*[-.25truecm] {\small \textsl{$^{(2)}$ Centre de Recerca Matem\`{a}tica, Campus de
Bellaterra,}}
    \\*[-.25truecm] {\small \textsl{08193 Bellaterra, Barcelona, Spain}}\\
    \\*[-.25truecm] {\small \textsl{$^{(3)}$ Departament de Matem\`{a}tiques,}}
     \\*[-.25truecm] {\small \textsl{Institut de Matem\`{a}tiques de la UPC-BarcelonaTech (IMTech),}}           
    \\*[-.25truecm] {\small \textsl{Universitat Polit\`{e}cnica de Catalunya}}
    \\*[-.25truecm] {\small \textsl{Colom 11, 08222 Terrassa, Spain}}
    \\*[-.25truecm] {\small \textsl{victor.manosa@upc.edu}}}
\date{}
\begin{document}

\maketitle
\begin{abstract}   
We study the dynamics of the piecewise planar rotations $F_{\lambda}(z)=\lambda (z-H(z)),
$ with $z\in\C$, $H(z)=1$ if $\mathrm{Im}(z)\ge0,$ $H(z)=-1$ if $\mathrm{Im}(z)<0,$ and $\lambda=\mathrm{e}^{i \alpha} \in\C$, being $\alpha$ a rational multiple of $\pi$. Our main results establish the dynamics in the so called regular set, which is the complementary of the closure of the set formed by the preimages of the discontinuity line. We prove that any connected component  of this set is open, bounded and periodic under  the action of $F_\lambda$, with a period   $\ell,$ that depends on the connected component. Furthermore, $F_\lambda^\ell $ restricted to each component acts as a rotation with a period  which also depends on the connected component. As a consequence, any point in the regular set is periodic. Among other results, we also prove that for any connected component of the regular set, its boundary 
is a convex polygon with certain maximum number of sides.
\end{abstract}

%
%

\noindent {\sl  Mathematics Subject Classification:} 37C25, 39A23, 37B10.

\noindent {\sl Keywords:}  Periodic points; pointwise periodic maps;
piecewise linear maps; fractal tessellations.

\newpage

\section{Introduction and main results}

We consider the family of planar piecewise linear maps which, in complex notation, writes as:
$$
F_{\lambda}(z)=\lambda (z-H(z)),
$$ where $z\in\C$, $\alpha\in\R,$ $\lambda=\mathrm{e}^{i\alpha} \in\C$ (thus $|\lambda|=1$), and 
$$H(z)=\left\{
                          \begin{array}{ll}
                            1, & \hbox{if $ z\in \C^+_0$,} \\
                            -1, & \hbox{if $z\in \C^-$,}
                          \end{array}
                        \right.
$$
being $\C^+=\{z\in \C; \mathrm{Im}(z)> 0\},$ $\C^-=\{z\in \C; \mathrm{Im}(z)< 0\}$  and $\C^+_0=\{z\in \C; \mathrm{Im}(z)\ge 0\}.$  Observe that these maps are invertible. Indeed, some easy computations
show that $F_{\lambda}^{-1}(z)={z}/{\lambda}+H\left({z}/{\lambda}\right).$

These maps have been studied in \cite{BG,ChGQ,CGMM, GQ}. When $\alpha$ is a rational multiple of $\pi$, they are closely related to polygonal dual billiards maps on regular polygons, \cite{DT, GuSi, VS}. 
The special cases $\alpha\in\mathcal{R}:= \{{\pi}/{3},\pi/{2},2\pi/{3},4\pi/3,3\pi/2, 5\pi/3\}$ have been studied in \cite{ChaChe13,ChaChe14,ChaWanChe12,CGMM}. We found these examples to be especially  interesting because they are easy explicit examples of pointwise periodic bijective maps which are not globally periodic, hence their sets of periods are unbounded. Furthermore, for each of these cases, in \cite{CGMM}, we gave an explicit first integral  whose  energy levels are
discrete and  they are bounded sets 
whose interior is a necklace formed by a finite number of open tiles of a certain
regular tessellation. The boundary of each of these open regular tessellations is formed by the so-called \emph{critical set} $\mathcal{F}=\{z\in\mathbb{C}$ such that $\mathrm{Im}(F^i(z))=0$ for some $i\in\mathbb{N}\cup\{0\}\}$,
formed by all the preimages of the discontinuity    line  $\R.$  In general,  it is well known that this discontinuity line, which is also called critical line, together with its preimages play a crucial role to understand the dynamics of the corresponding map, see    \cite{BGM, MGBC96}.

The general properties of the maps $F$ with
$\alpha\in [0,2\pi)\setminus\mathcal{R}$, being a rational multiple of~$\pi$, are still not completely known.  For instance, in 
\cite{GQ}  it is proved that for each of such cases there exists a sequence of open invariant nested necklaces that tend to infinity,  whose beads are polygons,  and where the dynamics of $F$ is given by a product of two rotations, see also Section \ref{ss:boundedness}.  Remarkably, although the adherence of the union of all these invariant necklaces does not fill the full plane, it allows to prove that all orbits of $F$ are bounded. As in the regular cases, the boundary of these necklaces is given by some segments of the critical set $\mathcal{F}$.
Our simulations indicate that, in the non-regular cases, the critical set seems to fractalize.

We consider the \emph{regular set}  $\mathcal{U}=\mathbb{C}\setminus \overline{\mathcal{F}}$, where $\overline{\mathcal{F}}$ is the closure of the critical set. Among the results in this work,  we prove that when $\alpha$ is a rational multiple of $\pi,$ any connected component of $\mathcal{U}$ is open, bounded and periodic. Moreover, any element of $\mathcal{U}$ is periodic. 
We also prove that if $\overline{\mathcal{F}}\setminus\mathcal{F}\neq\emptyset$, then the elements of 
$\overline{\mathcal{F}}\setminus\mathcal{F}$ are aperiodic.

 In next two theorems the map $F_\lambda$ and the set $\mathcal U$ are defined as above. Our first result characterizes the dynamics on the regular set  when $\alpha$ is a rational multiple of $\pi$.

\begin{teoa}\label{t:teoa} Set $\alpha=2\pi{p}/{q}$  where $p,q\in\mathbb{N}$ with $(p,q)=1$, then any connected component  of $\mathcal{U}$ is open, bounded and periodic under the action of $F_\lambda$. Furthermore, $F_\lambda$ permutes $\ell$ connected components 
of $\mathcal{U}$, that are invariant by
$F_\lambda^\ell$, which is a rotation of
order~$k$ around the center of each connected component. As a
consequence, on each connected component there is an $\ell$-periodic point (the center)
and the rest of the points are $k\ell$-periodic.  Moreover, both values $k$ and $\ell$ depend on the connected component and $\ell$ is unbounded.
\end{teoa}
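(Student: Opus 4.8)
The plan is to exploit that the real axis $\mathbb{R}$ is the discontinuity line of $F_\lambda$ and is contained in $\mathcal{F}\subseteq\overline{\mathcal{F}}$, so every connected component $V$ of $\mathcal{U}=\mathbb{C}\setminus\overline{\mathcal{F}}$ is disjoint from $\mathbb{R}$ and, being connected, lies in a single open half-plane. On that half-plane $F_\lambda$ is the affine isometry $z\mapsto \lambda(z\mp1)$, a rotation by $\alpha$ composed with a unit translation, so $F_\lambda|_V$ is a Euclidean isometry. Openness of $V$ is immediate since $\overline{\mathcal{F}}$ is closed. For boundedness I would invoke the nested sequence of open invariant necklaces of \cite{GQ}: their outer boundaries lie in $\overline{\mathcal{F}}$, separate the plane and escape to infinity, so each $V$, being disjoint from $\overline{\mathcal{F}}$ and connected, is trapped inside one such boundary and is therefore bounded.

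Next I would show $\mathcal{U}$ is invariant and that $F_\lambda$ permutes its components. Forward invariance $F_\lambda(\mathcal{U})\subseteq\mathcal{U}$ is clean: a point lies in $\mathcal{U}$ iff it has a ball $B$ with $B\cap\mathcal{F}=\emptyset$, i.e. every point of $B$ has all forward iterates off $\mathbb{R}$; since $B$ then lies in one half-plane, $F_\lambda(B)$ is again an open ball all of whose points have forward orbits avoiding $\mathbb{R}$, so $F_\lambda(B)\subseteq\mathcal{U}$. Thus $F_\lambda$ is a continuous injection of $\mathcal{U}$ into itself, affine on each component, and by invariance of domain an open map; each $F_\lambda(V)$ is an open connected subset of a unique component $V'$. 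The main obstacle is to upgrade this to $F_\lambda(V)=V'$, equivalently to prove that $\overline{\mathcal{F}}$ is fully invariant. Because $F_\lambda$ is a bijection one computes $F_\lambda(\mathcal{F})=\mathcal{F}\cup\lambda\mathbb{R}$, so forward invariance of $\overline{\mathcal{F}}$ reduces to the single inclusion $F_\lambda(\mathbb{R})=\lambda\mathbb{R}\subseteq\overline{\mathcal{F}}$, i.e. to the statement that no component of $\mathcal{U}$ crosses the discontinuity line $\lambda\mathbb{R}$ of $F_\lambda^{-1}$. Granting this, $F_\lambda^{-1}$ is continuous along the relevant boundaries, $F_\lambda(V)$ is also closed in $V'$ and hence equals $V'$, so $F_\lambda$ carries components bijectively onto components. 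I expect this inclusion to be the crux and to require the fine geometry of \cite{GQ}: the boundaries of the invariant necklaces consist of critical segments and are themselves invariant, so forward images of critical segments stay critical, and $\lambda\mathbb{R}=F_\lambda(\mathbb{R})$ is absorbed into the (invariant, fractalizing) closure $\overline{\mathcal{F}}$.

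Finally, periodicity follows by a counting argument. Fix a component $V$; it sits inside one bounded invariant necklace region $R$, so all iterates $F_\lambda^n(V)\subseteq R$. By the permutation property these are full components, pairwise equal or disjoint, each an isometric image of $V$ and hence of the same positive area $a$; since disjoint sets of area $a$ inside the finite-area region $R$ number at most $\mathrm{area}(R)/a$, the sequence $F_\lambda^n(V)$ takes finitely many values, and invertibility forces it to be purely periodic with some minimal period $\ell$. Thus $F_\lambda$ permutes the $\ell$ components $V,F_\lambda(V),\dots,F_\lambda^{\ell-1}(V)$, all invariant under $F_\lambda^\ell$. The return map $F_\lambda^\ell|_V$ is an orientation-preserving isometry with linear part $\lambda^\ell=\mathrm{e}^{i\ell\alpha}$: if $\lambda^\ell=1$ it is a translation preserving the bounded set $V$, hence the identity ($k=1$); otherwise it is a rotation about its unique fixed point, which must lie in $V$ (the center), of order $k=q/\gcd(\ell,q)$. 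Consequently the center is $\ell$-periodic and every other point of $V$ is $k\ell$-periodic, and both $k$ and $\ell$ manifestly depend on $V$. The unboundedness of $\ell$ I would read off from \cite{GQ}: since the nested necklaces tend to infinity and $F_\lambda$ permutes the growing number of beads of each cyclically, a component inside the $n$-th necklace has $\ell$ at least the length of that cycle, which tends to infinity (equivalently, the period $k\ell$ is unbounded while $k\mid q$ stays bounded).
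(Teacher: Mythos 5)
There is a genuine gap, and you have located it yourself: your entire route to periodicity runs through the claim that $F_\lambda$ maps connected components of $\mathcal{U}$ \emph{onto} connected components, which you reduce to the inclusion $F_\lambda(\R)=\lambda\R\subseteq\overline{\mathcal{F}}$ and then explicitly grant without proof (``I expect this inclusion to be the crux\dots''). That inclusion is not available from the necklace geometry of \cite{GQ}; in the paper it is Proposition \ref{p:coros}\,(b), and its proof \emph{uses} Theorem \ref{t:teoa}: if $z\in F_\lambda^{i}(\R)\cap\mathcal{U}$, then $z$ would be periodic by the theorem and hence, by bijectivity, would lie in $\mathcal{F}$, a contradiction. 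So the logical order you propose is reversed: the component-onto-component property (equivalently, invariance of $\overline{\mathcal{F}}$, Corollary \ref{c:inv}) is a \emph{consequence} of the theorem, not an ingredient one can use to prove it. Without it, your area/pigeonhole argument only yields an overlap $F_\lambda^{n}(V)\cap F_\lambda^{n+\ell}(V)\neq\emptyset$, not the equality $F_\lambda^{n}(V)=F_\lambda^{n+\ell}(V)$ that you need to make the orbit of sets purely periodic and to view $F_\lambda^{\ell}$ as a self-map of $V$.

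The paper bridges the gap from ``overlap'' to ``periodicity'' with the itinerary machinery: since $F_\lambda^{n}(V)$ is connected and disjoint from $\mathcal{F}$, all its points share one itinerary $\underline J$ (Lemma \ref{l:con}); the overlap forces $S^{\ell}(\underline J)=\underline J$; and Proposition \ref{p:itiper} then shows that every point of the convex cell $B(\underline J)$ is periodic, because $F_\lambda^{\ell}$ restricted to it is the affine map $z\mapsto\lambda^{\ell}z+b$, which is either a finite-order rotation or, using boundedness, the identity. You need some version of this step. A secondary gap: you assert that the fixed point of the rotation $F_\lambda^{\ell}\big\vert_V$ ``must lie in $V$''; for a general bounded invariant open set this is false (an annulus around the center is invariant), and the paper derives it from the convexity of $B(\underline J)$ (Lemma \ref{l:convex}\,(ii)), which you never establish for $V$. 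The remaining parts of your outline --- openness, boundedness via Theorem \ref{t:Goetz}\,(b), the equal-area overlap argument, the identification of $k$ as the order of $\lambda^{\ell}$, and the unboundedness of $\ell$ from the necklaces of \cite{GQ} --- agree with the paper.
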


 Our second result  describes the geometry of the connected component of~$\mathcal{U},$   again when $\alpha$ is a rational multiple of $\pi$.

\begin{teoa}\label{t:teob} Set $\alpha=2\pi {p}/{q}$ where $p,q\in\mathbb{N}$ with $(p,q)=1,$ then:
\begin{enumerate}[(a)]
\item Let $V$ be a connected component of $\mathcal{U}.$
Then $\partial V$ is a \emph{convex polygon} with at most $q$ sides if $q$
is even and at most $2q$ sides when $q$ is odd. 
\item   If $\ell$ is the period of $V$ and $(\ell,q)=1$, then $\partial V$ has either $q$ sides and $\partial V$ is a \emph{regular polygon} or $q$ is odd and
$\partial V$ has $2q$ sides.
\item For some values of $\alpha$ there are connected components that are not regular polygons.
\end{enumerate}
\end{teoa}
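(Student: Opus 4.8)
The plan is to realize each connected component $V$ of $\mathcal{U}$ as the interior of an intersection of half-planes, to read off the admissible edge directions from the rotation number, and then to use the rotational symmetry supplied by Theorem~\ref{t:teoa} to decide regularity. First I would exploit that, since $\overline{\mathcal{F}}\supseteq\mathbb{R}$, every point of $\mathcal{U}$ avoids $\mathbb{R}$ under all iterates, so each $z\in\mathcal{U}$ has a well-defined itinerary $(s_i)_{i\ge0}$ with $s_i=\sign(\im(F_\lambda^i(z)))\in\{+,-\}$, which is locally constant and hence constant on the connected set $V$. On the set where the prefix $s_0,\dots,s_{i-1}$ is fixed, the iterate $F_\lambda^i$ coincides with a single affine map $A_i(z)=\lambda^i z+c_i$, the linear part being $\lambda^i$ because each branch $z\mapsto\lambda(z\mp1)$ has linear part $\lambda$. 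Thus the condition $s_i=+$ (resp.\ $-$) is the open half-plane $P_i=\{z:\im(A_i z)>0\}$ (resp.\ $<0$), and the itinerary class of $V$ is the convex set $C=\bigcap_{i\ge0}P_i$. Membership in $P_0\cap\dots\cap P_{i-1}$ forces $F_\lambda^i=A_i$ there, so $C\cap\mathcal{F}=\emptyset$; as $\operatorname{int}(C)$ is open it is then disjoint from $\overline{\mathcal{F}}$, hence $\operatorname{int}(C)\subseteq\mathcal{U}$. Being convex, $\operatorname{int}(C)$ is connected and contains the open set $V$, so it lies in the component $V$; together with $V\subseteq C$ this gives $V=\operatorname{int}(C)$, which is convex. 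By Theorem~\ref{t:teoa}, $V$ is bounded, so $C=\overline{V}$ is a bounded convex body and $\partial V=\partial C$.

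For part (a) it remains to bound the number of edges. The line $\partial P_i=\{\im(\lambda^i z+c_i)=0\}$ has undirected direction $i\alpha\bmod\pi$, so every edge of $\partial V$ is parallel to a line in the finite set $D=\{i\alpha\bmod\pi:i\ge0\}$. Writing $\alpha=2\pi p/q$ with $(p,q)=1$, the residues $ip\bmod q$ exhaust $\{0,\dots,q-1\}$, giving the angles $\{2\pi j/q\}$ modulo $2\pi$; reducing modulo $\pi$ one finds $|D|=q/2$ when $q$ is even and $|D|=q$ when $q$ is odd, since $2\pi j/q\equiv2\pi j'/q\pmod\pi$ iff $q\mid 2(j-j')$. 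A bounded convex polygon has at most one edge per outward normal, and each direction of $D$ admits two opposite normals, so $\partial V$ has at most $2|D|$ edges, i.e.\ at most $q$ sides for $q$ even and at most $2q$ for $q$ odd. Boundedness of $C$ together with the finiteness of $D$ also guarantees that only finitely many $P_i$ bind, so $\partial V$ is a genuine convex polygon rather than a convex body with infinitely many faces.

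For part (b) I would use the rotational symmetry from Theorem~\ref{t:teoa}: $F_\lambda^\ell$ fixes $V$ and acts on it as a rotation of order $k$ about its center, with linear part $\lambda^\ell=\mathrm{e}^{2\pi i p\ell/q}$, so $k=q/\gcd(p\ell,q)=q/\gcd(\ell,q)$. When $(\ell,q)=1$ this gives $k=q$, hence $V$ is invariant under the rotation of angle $2\pi/q$ about its center. For $q\ge3$ this rotation changes the direction of every line by $2\pi/q\not\equiv0\pmod\pi$, so it can fix no edge; the edges of $\partial V$ therefore split into orbits of size $q$ and their number is a multiple of $q$. Combined with the bound from (a), this forces exactly $q$ sides when $q$ is even, and $q$ or $2q$ sides when $q$ is odd. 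Finally, a convex polygon with exactly $q$ sides admitting a $q$-fold rotational symmetry has its $q$ vertices equidistant from the center and equally spaced in angle, hence is a regular $q$-gon; this yields the two stated alternatives.

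For part (c) the point is that once $(\ell,q)>1$ the symmetry order drops to $k=q/\gcd(\ell,q)<q$, so convexity and the direction count no longer force equal side lengths. I would exhibit a concrete rational angle $\alpha$ and a component $V$ with such a period, compute its finitely many binding half-planes $P_i$ explicitly via the affine maps $A_i$, and check directly that the resulting convex polygon is neither equilateral nor equiangular, making the existence assertion rigorous. The main obstacle in the whole argument is the first step: identifying $V$ with $\operatorname{int}(C)$ while controlling the interaction with the possibly fractal part $\overline{\mathcal{F}}\setminus\mathcal{F}$, and ensuring that only finitely many constraints bind so that the boundary is polygonal. Once this is in place, convexity comes for free from the intersection-of-half-planes description, the direction count from the rotation number gives the side bounds, and the symmetry input from Theorem~\ref{t:teoa} settles regularity.
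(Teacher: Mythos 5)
Your treatment of parts (a) and (b) is essentially correct, and for (a) it takes a genuinely different route from the paper. You describe the itinerary class of $V$ directly as $C=\bigcap_{i\ge0}P_i$, an intersection of open half-planes whose normals take only $q/2$ (resp.\ $q$) undirected values, and conclude $V=\operatorname{int}(C)$, so $\partial V$ is automatically a convex polygon with at most two edges per direction. The paper instead first proves that $\partial V\subset\mathcal{F}$ (its Lemma on the boundary of components), decomposes $\partial V$ a priori into countably many $\mathcal{C}^1$ arcs using a classical result on convex sets, and shows each arc is a segment by a tangent-line argument; your half-plane description bypasses that machinery and is arguably cleaner. One point you should tighten: with infinitely many $P_i$, an edge of $\overline{C}$ need not lie on any single $\partial P_i$ (the constraints in a fixed normal direction may fail to attain their infimum); the correct statement is that for each of the $2|D|$ oriented normals the corresponding family of half-planes collapses to a single closed or open half-plane, so $\overline{C}$ is an intersection of at most $2|D|$ closed half-planes — which still yields the bound. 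Your part (b) is the same argument as the paper's (rotational symmetry of order $q=k$ when $(\ell,q)=1$ forces the edge count to be a multiple of $q$, and a $q$-gon with $q$-fold symmetry about an interior point is regular), and in fact you spell out the orbit-counting step more carefully than the paper does.

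Part (c), however, is a genuine gap: it is an existence statement, and you only describe a strategy (``exhibit a concrete $\alpha$ and a component with $(\ell,q)>1$ and check'') without producing the example. The heuristic is right — the paper's example has $\alpha=11\pi/6$, $q=12$, and a component of period $\ell=20$ with $\gcd(\ell,q)=4$, so the symmetry order drops — but the verification is the actual content: the paper writes down an explicit irregular hexagon $H$ with a $20$-periodic center, tracks the iterates of a boundary segment to confirm $\partial H\subset\mathcal{F}$, and checks that the $20$ images of $H^{\mathrm{o}}$ avoid the line $\mathbb{R}$, which is what certifies that $H^{\mathrm{o}}$ really is a full connected component of $\mathcal{U}$ and not merely a convex subset of one. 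Without such a computation your part (c) remains unproved.
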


Although in this work, we only deal with the case that $\alpha=2\pi \theta$ with $\theta$ rational, we remark that when  $\theta$ is irrational similar arguments that the ones used in the proof of Theorem \ref{t:teob} can be applied to prove than then $\partial V$ is a circle. We do not include this study in this work.

 Theorem \ref{t:teob} characterizes the connected components in the regular set $\mathcal{U}$ as open convex polygons. For this reason sometimes we will refer them as \emph{tiles} of the tessellation in $\mathcal{U}$ defined by $\overline{\mathcal{F}}$.

Theorem \ref{t:teoa} is proved in Section \ref{ss:proofofTA}
and Theorem \ref{t:teob} is proved in Section~\ref{ss:proofofTB}. In Section~\ref{s:critical} it is also proved that any point in $\mathcal{F}\setminus \mathcal{\overline{F}}$ is not periodic, see Proposition \ref{p:coros}.

In Section \ref{s:evidences}
we also present some evidences of the fractalization of $\mathcal{F}$, and the  unboundedness of periods in compact sets both in $\mathcal{U}$ and $\mathcal{F}$, as well as for the existence of non-periodic points in $\mathcal{F}$.

\section{Dynamics on the regular set}\label{s:proofteob}

The object of this section is to prove Theorem \ref{t:teoa}, see Section \ref{ss:proofofTA}. The essential ingredients of the proof are the facts that the connected components 
of the regular set $\mathcal{U}=\C\setminus \mathcal{\overline{F}}$ are the sets of points sharing the same itinerary (see the definition below) which are convex and bounded as well as the union of all its iterates. 

\subsection{Boundedness of orbits and connected subsets of the regular set}\label{ss:boundedness}

We are interested in the dynamical study of $F_{\lambda}$ in the
case when $\lambda=\mathrm{e}^{2\pi \frac{p}{q} i}$ with $p,q\in \N$ and
$(p,q)=1.$ That is, when the maps $F_{\lambda}^+:=F_{\lambda}\big\vert_{\C^+_0}$ and $F_{\lambda}^-:F_{\lambda}\big\vert_{\C^-}$
are (rational) periodic rotations. This situation was studied in
\cite{GQ}, where the authors proved  several facts which are essential for our
purposes. We list three consequences of their results. The first one, is that for any bounded subset $A$ of $\C$
the  (forward or backward) orbit of $A$ by $F_{\lambda}$ is
bounded. The second one, is that any connected unbounded set must
intersect $\mathcal{F}$ and the third one is that there is a sequence of invariant necklaces formed by periodic points and these necklaces  have increasing  periods when they tend towards infinity. We state these results for future references in the next result, which is a consequence of Theorem 1 in \cite{GQ}:

\begin{teo}[Goetz \& Quas, \cite{GQ}]\label{t:Goetz}  Set $\alpha=2\pi {p}/{q}$  where $p,q\in\mathbb{N}$ with $(p,q)=1$.
The following assertions hold:
\begin{enumerate}\item[(a)] For any bounded subset $A\subset \C$ the
orbit of $A$ is bounded. That is, the set
$\bigcup_{i=-\infty}^{\infty}F_{\lambda}^i(A)$ is bounded
\item[(b)] If $B$ is  a connected and unbounded subset of $\C$, then it must intersect
$\mathcal{F}.$

\item[(c)] There is a sequence of necklaces, like the ones in the right picture of Figure \ref{f:1}, which tend to infinity, filled with periodic point of $F_\lambda$ and with certain computable periods that tend to infinity.
\end{enumerate}\end{teo}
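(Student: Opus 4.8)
The plan is to derive the three assertions from the invariant structure at infinity built in \cite{GQ}, after recording the elementary computation that makes that structure appear. A direct induction gives
\[
F_\lambda^{n}(z)=\lambda^{n}z-\sum_{j=0}^{n-1}\lambda^{\,n-j}H\bigl(F_\lambda^{j}(z)\bigr),
\]
and since $\lambda=\mathrm{e}^{2\pi i p/q}$ is a primitive $q$-th root of unity, so that $\lambda^{q}=1$, we obtain $F_\lambda^{q}(z)=z-c(z)$ with displacement $c(z)=\sum_{j=0}^{q-1}\lambda^{\,q-j}H(F_\lambda^{j}(z))$. This displacement is bounded in modulus by $q$ and takes only finitely many values, because it depends solely on the itinerary $\bigl(H(z),H(F_\lambda(z)),\dots,H(F_\lambda^{q-1}(z))\bigr)$. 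For $|z|$ large one has $F_\lambda^{j}(z)\approx\lambda^{j}z$, so the itinerary—and hence $c$—is eventually a piecewise constant function of $\arg z$ alone, and $F_\lambda^{q}$ is a bounded perturbation of the identity near infinity. This is exactly the regime analysed by Goetz and Quas: their Theorem~1 converts the angular combinatorics of $c$ into a sequence of bounded $F_\lambda$-invariant polygonal necklaces $N_1,N_2,\dots$ around the origin, whose tiles are cyclically permuted by $F_\lambda$, whose open enclosed Jordan domains satisfy $D_1\subset D_2\subset\cdots$ and exhaust $\C$, and whose outer boundaries lie in $\mathcal{F}$.

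Granting this, assertion (c) is essentially the statement of the construction: the necklaces tend to infinity because the $D_k$ exhaust the plane, they are filled by periodic points because each bead returns to itself under a suitable power of $F_\lambda$, and the periods grow and are computable because they are governed by the number of beads of $N_k$ and by the finitely many jump values of $c$, both of which are explicit functions of $p,q$ and $k$.

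For (a), given a bounded $A$ I would use compactness of $\overline{A}$ together with the exhaustion $D_1\subset D_2\subset\cdots$ to fix $k$ with $\overline{A}\subset D_k$. Since $F_\lambda$ is a bijection of $\C$ that carries the invariant necklace $N_k$, and hence its outer boundary $\partial D_k$, onto itself, the bounded domain $D_k$ is invariant under both $F_\lambda$ and $F_\lambda^{-1}$—granting the trapping addressed below—so that $\bigcup_{i\in\Z}F_\lambda^{i}(A)\subset D_k$ is bounded. Assertion (b) then follows by a purely topological argument: if $B$ is connected and unbounded, choose $z_0\in B$ lying in some $D_k$; since $B$ is unbounded it also contains a point outside the bounded set $\overline{D_k}$, and a connected set meeting both $D_k$ and its exterior must meet the separating curve $\partial D_k\subset\mathcal{F}$.

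The step I expect to be most delicate is the trapping used in (a): upgrading ``the necklace $N_k$ is invariant'' to ``the enclosed domain $D_k$ is invariant''. Because $F_\lambda$ is discontinuous across $\R$ it is not a homeomorphism of $\C$, so one cannot directly invoke the invariance of the interior of a Jordan curve; although $F_\lambda$ is a piecewise isometry and therefore maps bounded sets to bounded sets, ruling out that a piece of $D_k$ is carried to its exterior still requires knowing that the discontinuity line meets $\partial D_k$ only within $\mathcal{F}$. This is precisely what is secured in \cite{GQ}, and for the present paper it suffices to quote it; the deductions of (a), (b) and (c) above are then short.
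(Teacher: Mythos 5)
The paper gives no proof of this statement: it is quoted directly as ``a consequence of Theorem 1 in \cite{GQ}'', so there is no internal argument to compare against. Your sketch is a correct account of how (a)--(c) are extracted from the Goetz--Quas necklace construction (the computation $F_\lambda^{q}(z)=z-c(z)$ with $|c(z)|\le q$, the exhaustion of $\C$ by the enclosed domains $D_k$, and the separation argument for (b)), and you rightly isolate the only genuinely delicate point --- invariance of the enclosed domains rather than merely of the necklaces --- as the input that must be taken from \cite{GQ}, which is exactly how the paper uses it.
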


\begin{figure}[H]
\centerline{\includegraphics[scale=0.32]{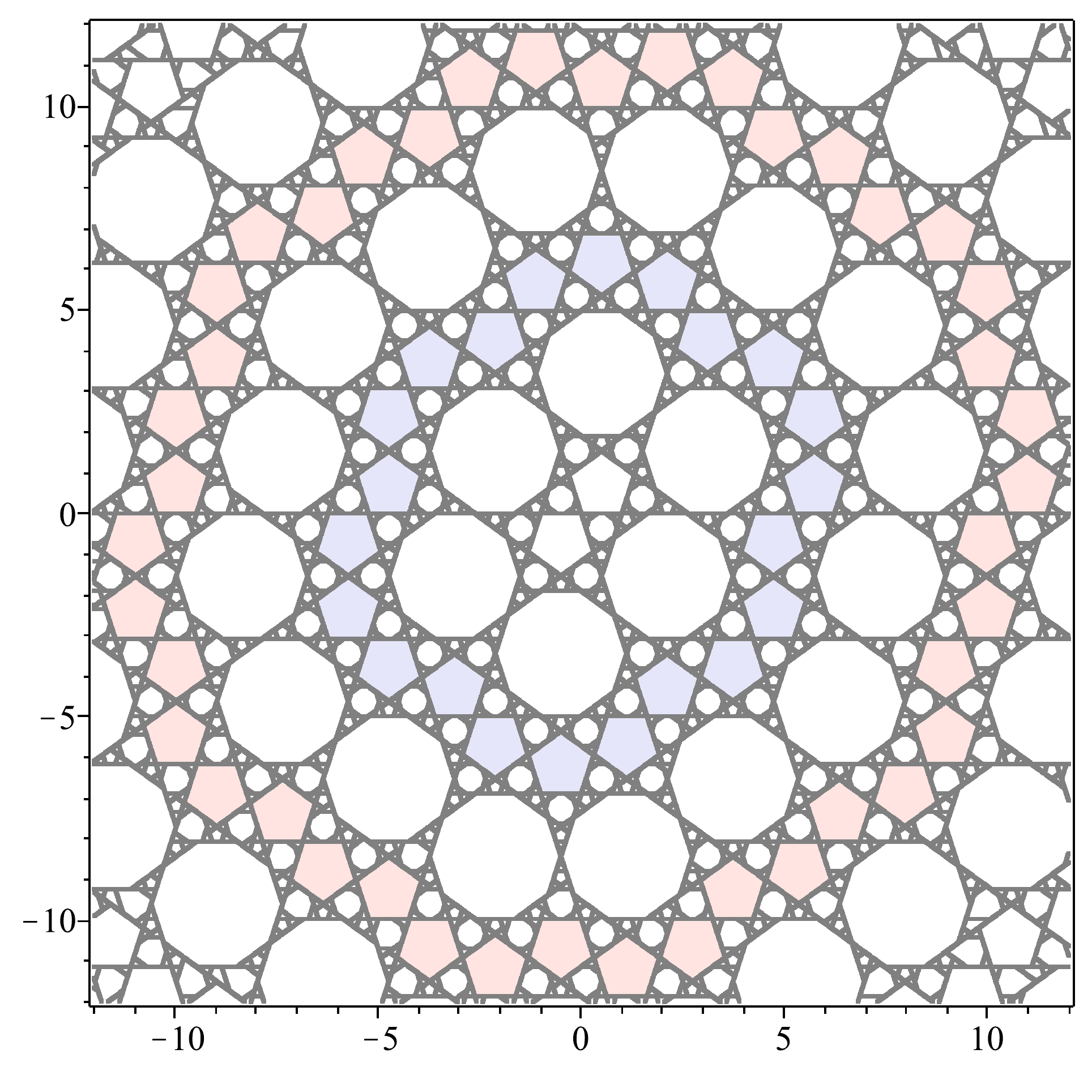}\qquad\includegraphics[scale=0.32]{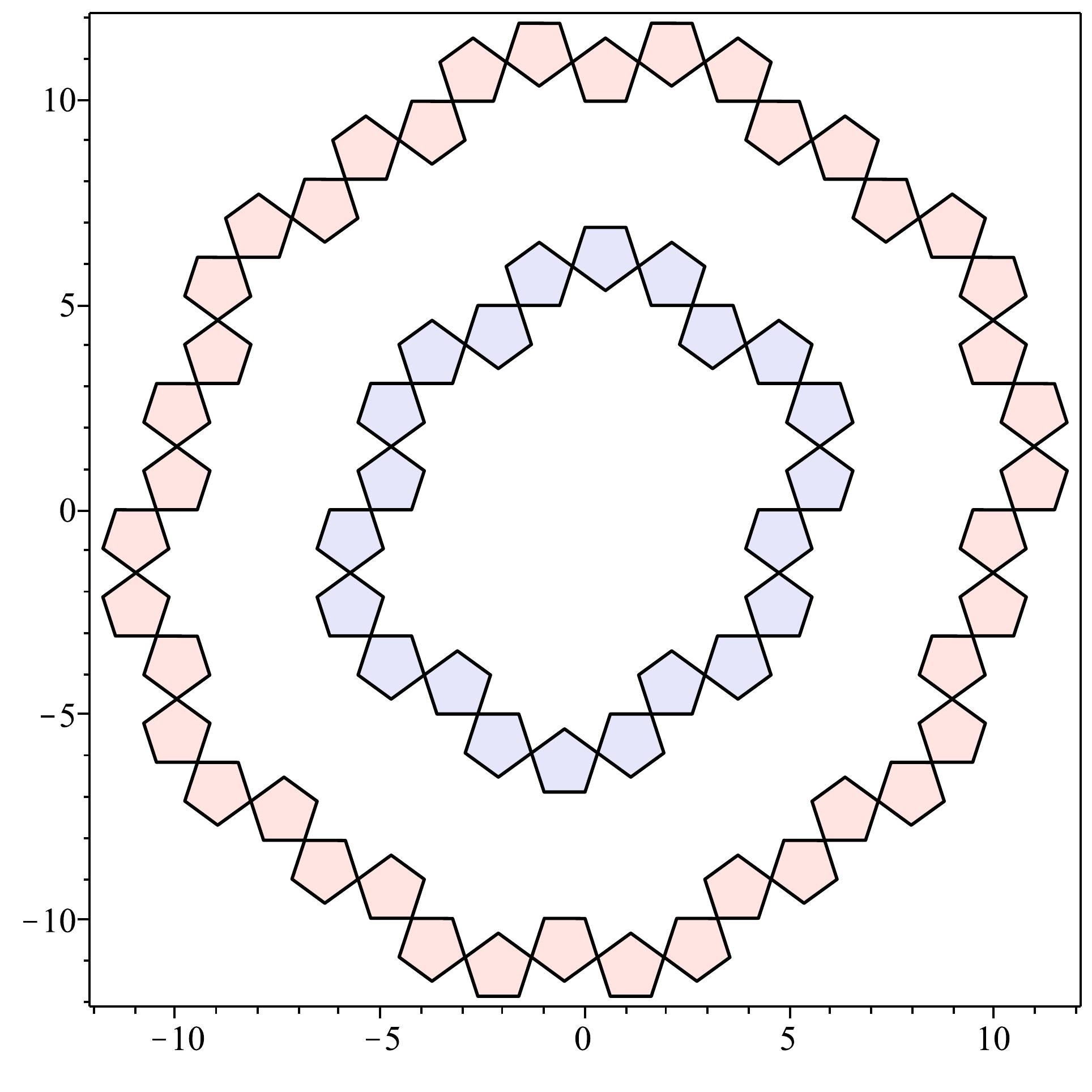}}
\caption{Necklaces for  $\alpha={8\pi}/{5}$. The critical set, formed by the union of a numerable sets of lines, in grey, in the left picture.  In the right picture two invariant necklaces.}\label{f:1}
\end{figure}

\subsection{Properties of the sets of points with the same itinerary}\label{ss:itineraries}

For $z\in \C$, we define the {\it address} of $z, $ denoted $A(z),$
by $$A(z)=\left\{
            \begin{array}{ll}
              +, & \hbox{if $z\in\C^+$,} \\
              -, & \hbox{if $z\in\C^-$.}
            \end{array}
          \right.$$

We also define the {\it itinerary of length $n$} as the finite
sequence of symbols $$\underline I_n(z)=A(z)A(F(z))\ldots
A(F^{n-1}(z)),$$ and the {\it itinerary} as the infinite sequence of
symbols $$\underline I(z)=A(z)A(F(z))\ldots A(F^{i}(z))\ldots$$

Notice that if $\underline I_n(z)=s_1\ldots s_n$ then
$F_{\lambda}^n(z)=F_{\lambda}^{s_n}\circ\ldots\circ
F_{\lambda}^{s_1}(z)$.

Let ${\cal J}$ be the set of infinite sequences of two symbols $+$
and $-,$ and let $S$ be the shift operator defined in $\cal J.$ That is
$S(s_1s_2\ldots s_i\ldots)=s_2s_3\ldots s_i\ldots.$ An element
$\underline I\in \cal J$ is called $n$-periodic if $S^n(\underline
I)=\underline I$ and $n$ is the smallest natural with this property.
In this case we will write $\underline I=(s_1s_2\ldots
s_{n})^{\infty}.$ Since $\underline
I(F_{\lambda}^n(z))=S^n(\underline I(z))$ if $z$ is $n$-periodic for
$F_{\lambda}$ then $\underline I(z)$ is $k$-periodic with $k$
divisor of $n.$

Let ${\cal J}_n$ be the set of finite sequences of two symbols $+$
and $-,$ with length $n.$ For $\underline J\in {\cal J}$ and
$\underline J_n\in {\cal J}_n$ we define the corresponding subsets
of $\C$ of points in the regular set with the same itinerary, given by
$$B(\underline J)=\{z\in \mathcal{U}\,:\, \underline I(z)=\underline J\} \mbox{ and } B(\underline J_n)=\{z\in \mathcal{U}\,:\, \underline I_n(z)=\underline
J_n\}.$$

The next result can be found in \cite[Lemma 3]{CGMM}, we include its proof for completeness.
\begin{lem}\label{l:convex} (i) Let $\underline J_n\in {\cal J}_n.$ Then $B(\underline
J_n)$ is either empty or convex. Moreover
$F_{\lambda}^n\big\vert_{B(\underline J_n)}$ is an affine map.

(ii) Let $\underline J\in {\cal J}.$ Then $B(\underline
J)$ is either empty or convex and bounded.
\end{lem}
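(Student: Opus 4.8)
The plan is to prove the two parts by using the structure of the maps $F_\lambda^{\pm}$ as restrictions of affine isometries and tracking how the half-plane condition propagates under iteration.

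\medskip

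\noindent\textbf{Part (i).} First I would observe that on the open half-plane $\C^+$ the map $F_\lambda$ agrees with the affine map $z\mapsto \lambda(z-1)$, and on $\C^-$ it agrees with $z\mapsto\lambda(z+1)$; write these as $F_\lambda^{+}$ and $F_\lambda^{-}$. The key point is that each $F_\lambda^{\pm}$ is an \emph{affine} map whose linear part is the rotation by $\lambda$, hence each preserves convexity, and so does any composition of them. Now fix $\underline J_n=s_1\cdots s_n$. By the remark just before the lemma, $F_\lambda^n$ restricted to $B(\underline J_n)$ equals $F_\lambda^{s_n}\circ\cdots\circ F_\lambda^{s_1}$, a composition of affine maps, hence affine; this gives the last claim of (i) once we know $B(\underline J_n)$ is nonempty. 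For convexity I would proceed by induction on $n$. The set $B(\underline J_n)$ is exactly the set of $z\in\mathcal U$ for which $F_\lambda^{i-1}(z)$ lies in the open half-plane with sign $s_i$, for each $i=1,\dots,n$. Writing $g_i:=F_\lambda^{s_{i-1}}\circ\cdots\circ F_\lambda^{s_1}$ (with $g_1=\mathrm{id}$), the $i$-th condition reads $g_i(z)\in\C^{s_i}$, i.e. $z\in g_i^{-1}(\C^{s_i})$. Since each $g_i$ is an \emph{invertible affine} map, $g_i^{-1}(\C^{s_i})$ is an open half-plane of $\C$, and $B(\underline J_n)$ (intersected with $\mathcal U$) is the intersection of these $n$ open half-planes. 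A finite intersection of half-planes is convex, which proves (i). I would phrase this directly as a half-plane intersection rather than as an induction, since it is cleaner.

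\medskip

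\noindent\textbf{Part (ii).} For the infinite itinerary, I would write $B(\underline J)=\bigcap_{n\ge1}B(\underline J_n)$, where $\underline J_n$ is the length-$n$ truncation of $\underline J$. Each $B(\underline J_n)$ is convex by (i), and the family is nested decreasing in $n$; an intersection of convex sets is convex, so $B(\underline J)$ is convex. Convexity is therefore immediate from (i). The substantive claim is \emph{boundedness}. Here is where I would invoke the results imported from Goetz--Quas, namely Theorem~\ref{t:Goetz}(b): any connected \emph{unbounded} subset of $\C$ must intersect $\mathcal F$. Since $B(\underline J)\subset\mathcal U=\C\setminus\overline{\mathcal F}$, it is disjoint from $\mathcal F$; and being convex it is connected. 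If it were unbounded, Theorem~\ref{t:Goetz}(b) would force it to meet $\mathcal F$, a contradiction. Hence $B(\underline J)$ is bounded.

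\medskip

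\noindent The step I expect to require the most care is \emph{boundedness} in (ii), and more specifically checking that the hypotheses of Theorem~\ref{t:Goetz}(b) genuinely apply: one must be sure that $B(\underline J)$ is connected (which follows from convexity, since convex sets in $\R^2$ are connected) and genuinely disjoint from $\mathcal F$ rather than merely from $\overline{\mathcal F}$. The containment $B(\underline J)\subset\mathcal U$ gives disjointness from $\overline{\mathcal F}\supset\mathcal F$, so this is fine, but it is worth stating explicitly so that the appeal to the Goetz--Quas result is unambiguous. The convexity arguments themselves are routine once the half-plane description is set up, so the half-plane reformulation in (i) is the load-bearing observation and I would state it carefully.
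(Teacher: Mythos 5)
Your proposal is correct and follows essentially the same route as the paper: the paper proves (i) by induction, showing $B(\underline J_n)=B(\underline J_{n-1})\cap G^{-1}(\mathbb{C}^{s_n})$ with $G$ affine and invertible, which is exactly your half-plane intersection unrolled, and (ii) is proved identically (infinite intersection of convex sets, then boundedness via Theorem~\ref{t:Goetz}(b) using that a convex set is connected and $B(\underline J)\subset\mathcal{U}$ is disjoint from $\mathcal{F}$). The only difference is presentational, and your explicit remark that the appeal to Goetz--Quas needs connectedness plus disjointness from $\mathcal{F}$ matches the paper's parenthetical justification.
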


\begin{proof} The proof of the convexity follows easily by induction. If $n=1,$
$B(\underline J_n)$ is either $\mathbb{C}^+$ or $\mathbb{C}^-$ both convex sets.
Assume that the result holds for sequences of length $n-1$ and set
$\underline J_{n-1}=(s_1,\ldots,s_{n-1}).$ Therefore we have
$$B(\underline J_n)=\{(x,y)\in B(\underline J_{n-1})\,:\, F_\lambda^{n-1}(x,y)\in \mathbb{C}^{s_{n}}\}.$$ Moreover,
 $F_\lambda^{n-1}$ restricted to $B(\underline J_{n-1})$ is the affine map
$G=F_{s_{n-1}}\circ\ldots\circ F_{s_1}.$ So we have
$$B(\underline J_n)=B(\underline J_{n-1})\cap G^{-1}(\mathbb{C}^{s_{n}}).$$ This fact
 proves that $B(\underline J_n)$ is convex because it is
 the intersection of two convex sets. This ends the inductive proof of convexity. Furthermore,
 $F_\lambda^n(x,y)=F_\lambda^{s{_n}}\circ F_\lambda^{s_{n-1}}\circ\cdots\circ
F_\lambda^{s_1}(x,y),$ for all $(x,y)\in B(\underline J_n),$ showing that
$F_\lambda^n$ restricted to $B(\underline J_n)$ is an affine map.

\smallskip

(ii) Write $\underline J=s_1s_2\ldots s_i\ldots$ Then
$B(\underline J)=\bigcap_{n=1}^{\infty}B((s_1\ldots s_n))$ and the lemma
follows because the infinite intersection of convex sets is either
empty or convex. Since it is convex it is also connected. Then,  it
is bounded  from Theorem \ref{t:Goetz} (b) (recall that, by definition, $B(\underline J)\subset\mathcal{U}$).
\end{proof}

The next result is one of the key steps in the proof of Theorem \ref{t:teoa}. It establishes that for $\alpha$ being a rational multiple of $\pi$, the points with periodic itinerary are periodic:

\begin{propo}\label{p:itiper} Assume that $\lambda=e^{\frac{p}{q}2\pi i}$ with $p,q\in\mathbb{N}$ with $(p,q)=1$.  Let $z\in\mathcal{U},$ then $\underline I(z)$ is periodic if and only if $z$ is periodic for
$F_{\lambda}.$ Moreover $B(\underline I(z))$ is bounded and convex.\end{propo}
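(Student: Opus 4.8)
The plan is to prove the two implications separately and then read off the ``moreover'' clause from Lemma~\ref{l:convex}. The direction ``$z$ periodic $\Rightarrow \underline I(z)$ periodic'' is immediate: if $F_\lambda^N(z)=z$ then, using $\underline I(F_\lambda^N(z))=S^N(\underline I(z))$, we get $S^N(\underline I(z))=\underline I(z)$, so $\underline I(z)$ is periodic with period dividing $N$. Likewise, once the converse is established, the final assertion that $B(\underline I(z))$ is bounded and convex is nothing but Lemma~\ref{l:convex}(ii) applied to $\underline J=\underline I(z)$, which is nonempty since it contains $z$.

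For the substantial direction, suppose $\underline I(z)=(s_1\cdots s_n)^\infty$ is $n$-periodic and set $\underline J_n=s_1\cdots s_n$. I would introduce the cell $R_n$ of all points whose length-$n$ itinerary is defined and equals $\underline J_n$; by the computation in Lemma~\ref{l:convex}(i) this cell is convex and, more importantly, $F_\lambda^n$ coincides on $R_n$ with a single affine map $A$. Since $A$ is a composition of $n$ branches of $F_\lambda$, each having linear part $\lambda$, the map $A$ has the form $A(w)=\lambda^n w+c$ for some $c\in\C$. The key geometric observation is that the $F_\lambda^n$-orbit of $z$ never leaves $R_n$: because $z\in\mathcal U$, no forward iterate $F_\lambda^k(z)$ lies on $\R$, and the periodicity $S^{jn}(\underline I(z))=\underline I(z)$ forces the length-$n$ itinerary of each $F_\lambda^{jn}(z)$ to be exactly $\underline J_n$. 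Hence $F_\lambda^{jn}(z)\in R_n$ for every $j\ge 0$, and an immediate induction gives $F_\lambda^{jn}(z)=A^j(z)$.

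It then remains to analyse the affine map $A(w)=\lambda^n w+c$. Its linear part $\lambda^n=\mathrm{e}^{2\pi i np/q}$ is a root of unity, say of order $m$. If $\lambda^n\neq 1$, then $A$ is a rotation of finite order about its fixed point $c/(1-\lambda^n)$, so $A^m=\mathrm{id}$ and therefore $F_\lambda^{mn}(z)=A^m(z)=z$. If $\lambda^n=1$, then $A$ is the translation $w\mapsto w+c$; here I would invoke Theorem~\ref{t:Goetz}(a), which guarantees that the forward orbit $\{F_\lambda^k(z)\}$ is bounded, so in particular $\{A^j(z)=z+jc\}$ is bounded, forcing $c=0$ and hence $F_\lambda^n(z)=z$. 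In either case $z$ is periodic, completing the equivalence.

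The main obstacle I anticipate is the bookkeeping that keeps the orbit inside a region where $F_\lambda^n$ is genuinely affine: one must be careful that the iterates $F_\lambda^{jn}(z)$ stay in the correct itinerary cell, and for this it is cleaner to work with the finite-itinerary region $R_n$ (which only requires that the first $n$ iterates avoid $\R$) rather than with $B(\underline J_n)$, thereby sidestepping any question of invariance of $\mathcal U$ under $F_\lambda$. The only other place a nontrivial external input enters is the translation case, where boundedness of orbits from Theorem~\ref{t:Goetz}(a) is exactly what rules out a nonzero translation.
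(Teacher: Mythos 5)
Your proof is correct and follows essentially the same route as the paper's: express $F_\lambda^n$ on the itinerary cell as the affine map $w\mapsto\lambda^n w+c$, conclude periodicity when $\lambda^n\neq 1$ since it is then a finite-order rotation, and use boundedness of orbits (Theorem~\ref{t:Goetz}) to force $c=0$ in the translation case, with the ``moreover'' clause coming from Lemma~\ref{l:convex}(ii). The only (harmless, and in fact slightly more careful) deviation is that you track the orbit of $z$ inside the finite-itinerary cell $R_n$ rather than asserting, as the paper does, that $B(\underline I(z))$ is positively invariant under $F_\lambda^n$ and that all of its points are periodic.
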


\begin{proof} Trivially, if $z$ is a periodic point then $\underline I(z)$ is periodic.  Assume now, that  $z\in\mathcal{U},$ is a point such that 
$\underline I(z)=(s_1\ldots s_\ell)^{\infty}.$ In this case
$B(\underline I(z))$ is non empty and positively invariant by
$F_{\lambda}^\ell.$ By Lemma \ref{l:convex} (i), the map
$F_{\lambda}^\ell\big\vert_{B(\underline I(z))}$ is affine. Direct
computations show that $F_{\lambda}^\ell\big\vert_{B(\underline
I(z))}(z)=\lambda^\ell z+b$ for some $b\in \C.$ Thus when $\ell$ is not a
multiple of $q$ we have that $F_{\lambda}^\ell\big\vert_{B(\underline
I(z))}$ is a rotation of angle $\ell 2\pi p/q\ne 0$ centered at some
point of the plane. This shows that all points belonging to
$B(\underline I(z))$ are periodic. Note that, in fact, if $w\in
B(\underline I(z))$, then the orbit by the rotation of angle $\ell
2\pi p/q$  is also contained in $B(\underline I(z)).$ Since, by Lemma~\ref{l:convex}~(ii), $B(\underline I(z))$ is convex it follows that the
center of the rotation is contained in $B(\underline I(z)).$ On the
other hand  if $B(\underline I(z))$ is unbounded, from Lemma~\ref{t:Goetz} it must intersects $\mathcal{F}\,:\,$ a contradiction.

When $\ell$ is a multiple of $q$ we get
$F_{\lambda}^\ell\big\vert_{B(\underline I(z))}(z)=z+b.$ If $b\ne 0$ it
follows that $B(\underline I(z))$ must be unbounded\,:\, in
contradiction with Lemma \ref{l:convex} (ii). So $b=0$ and $F_{\lambda}^\ell\big\vert_{B(\underline
I(z))}= \mathrm{Id}.$ 
\end{proof}

\subsection{Proof of Theorem \ref{t:teoa}}\label{ss:proofofTA}

To end the proof of Theorem \ref{t:teoa} we need the following last technical result.

\begin{lem}\label{l:con} If $V\subset\mathbb{C}$ is connected and $V\cap\mathcal{F}=\emptyset,$ then all points in $V$ have the same itinerary.\end{lem}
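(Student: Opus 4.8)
The plan is to show that for each length $n$ the map $z \mapsto \underline{I}_n(z)$ is locally constant on $V$, and then invoke the connectedness of $V$ to conclude that it is globally constant; letting $n \to \infty$ then yields the full itinerary. The first thing to record is that, since $V\cap\mathcal{F}=\emptyset$, for every $z\in V$ and every $i\ge 0$ we have $\mathrm{Im}(F_\lambda^i(z))\neq 0$, so the address $A(F_\lambda^i(z))$, and hence the itinerary $\underline{I}(z)$ of every length, is well defined at each point of $V$.

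The key step is to prove that for every finite sequence $\underline{J}_n\in{\cal J}_n$ the set
$$
U_n(\underline{J}_n)=\{z\in\C\,:\,\mathrm{Im}(F_\lambda^i(z))\neq 0 \text{ for } 0\le i\le n-1 \text{ and } \underline{I}_n(z)=\underline{J}_n\}
$$
is open. I would argue this by induction on $n$, mirroring the computation already carried out in Lemma~\ref{l:convex}. For $n=1$ the set is either $\C^+$ or $\C^-$, both open. For the inductive step write $\underline{J}_n=(s_1,\ldots,s_n)$; on the open set $U_{n-1}(\underline{J}_{n-1})$ the iterate $F_\lambda^{n-1}$ coincides with the single affine map $G=F_{s_{n-1}}\circ\cdots\circ F_{s_1}$ and is therefore continuous there, so that
$$
U_n(\underline{J}_n)=U_{n-1}(\underline{J}_{n-1})\cap G^{-1}(\C^{s_n}),
$$
which is the intersection of an open set with the preimage of the open half-plane $\C^{s_n}$ under a continuous map, hence open.

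With this in hand the conclusion is immediate. For fixed $n$, the sets $\{U_n(\underline{J}_n)\}_{\underline{J}_n\in{\cal J}_n}$ are pairwise disjoint and open, and by the first observation their union contains $V$ (each $z\in V$ lies in $U_n(\underline{I}_n(z))$). Intersecting with $V$ exhibits the connected set $V$ as a disjoint union of relatively open pieces, so $V$ must be contained in a single $U_n(\underline{J}_n)$; that is, $\underline{I}_n$ is constant on $V$. Since this holds for every $n$, the full itinerary $\underline{I}$ is constant on $V$, which is the assertion of the lemma.

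The delicate point, and the step I would be most careful with, is the continuity of $F_\lambda^{n-1}$ on $U_{n-1}(\underline{J}_{n-1})$: although $F_\lambda$ itself is discontinuous across $\R$, on the locus where the first $n-1$ addresses are prescribed every application of $F_\lambda$ uses one fixed affine branch, so the composition is genuinely a single affine map there. This is exactly what makes the preimage $G^{-1}(\C^{s_n})$ open, and it relies on $U_{n-1}(\underline{J}_{n-1})$ avoiding the discontinuity line $\R$, a fact guaranteed by its defining conditions.
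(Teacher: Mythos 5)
Your proof is correct and rests on the same two ingredients as the paper's: continuity of the iterates $F_\lambda^i$ on sets avoiding $\mathcal{F}$ (each application of $F_\lambda$ uses a single fixed affine branch there) and the connectedness of $V$. The paper phrases it in the forward direction---$F_\lambda^i(V)$ is connected and misses $\R$, hence lies in one of the two half-planes---while you pull the half-planes back to open itinerary cylinders and argue that $\underline I_n$ is locally constant; these are the same argument in dual packaging, so no substantive difference.
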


\begin{proof} Since  $V\cap\mathcal{F}=\emptyset,$ then $F_{\lambda}^i(V)\cap\R=\emptyset$ for all $i\in\N.$
Therefore $F_{\lambda}^i\big\vert_V$ is continuous and since $V$ is
connected  $F_{\lambda}^i(V)$ is contained in one of the two
connected components of $\C\setminus\R.$ This ends the proof of the
lemma.\end{proof}

\begin{proof}[Proof of Theorem \ref{t:teoa}] The fact that any component component of $\mathcal{U}$ is open follows from the fact that $\mathcal{U}$ is open. Moreover, since any connected
component does not intersects $\mathcal{F}$ it follows, by Theorem \ref
{t:Goetz} (b), that they are bounded. Also  since any connected
component $V$ does not intersect $\mathcal{F}$, it follows that
$F^i_{\lambda}\big\vert_V$ is continuous and preserves area for  any $i\in\N$. So we obtain that for all
$i\in\N$ the sets $F_{\lambda}^i(V)$ are connected and with the same
area that~$V$. Since, by Theorem \ref{t:Goetz} (a),
$\bigcup_{i=0}^{\infty}(V)$ is bounded, then there must be an overlapping of the images of $V$, that is, there exists
$n,\ell\in \N$ such that 
\begin{equation}\label{e:overlapping}
F_{\lambda}^n(V)\cap
F_{\lambda}^{n+\ell}(V)\ne\emptyset.
\end{equation} 
By Lemma \ref{l:con}, the points in
$F_{\lambda}^n(V)$ have all the same itinerary, namely $\underline
J.$ As a consequence of~\eqref{e:overlapping},
necessarily $S^\ell(\underline J)=\underline J$, that is, the itinerary  
$\underline J$ is $\ell$-periodic.
  Therefore, from Proposition
\ref{p:itiper}, the points in $F_{\lambda}^n(V)$ are
periodic and, since $F_{\lambda}$ is invertible, the same holds for the
points in~$V.$ 

 Notice that $F_\lambda$  permutes the $\ell$ tiles  $F^i_\lambda(V)$ with $i=0,\ldots,\ell-1$. 
Moreover, each of these sets $F^i_\lambda(V)$ is invariant by
$F_\lambda^\ell(z)=\lambda^\ell z+b$, with   $b\in\mathbb{C}$, which is a rotation of
order~$k$ around a center point, which is contained in  this set  because of the convexity  of $F^i_\lambda(V)$; see Lemma~\ref{l:convex}~(ii) and the proof of Proposition \ref{p:itiper}. As a
consequence, on each tile there is a $\ell$-periodic point (the center)
and the rest of the points are $k\ell$-periodic. Of course $F_\lambda^{k\ell}=\mathrm{Id}$.

 Finally, the periods on $\mathcal{U}$ are unbounded as a direct consequence of the results of~\cite{GQ}, see item (c) in Theorem \ref{t:Goetz}.
\end{proof}

\section{Geometrical aspects.}\label{s:critical}

The objective of this section is to prove Theorem \ref{t:teob}, which describes the geometry of the boundary of the connected components of $\mathcal{U}$. This is done in Section \ref{ss:proofofTB}. To do this, several results characterizing the closure of the critical set must be established.

\subsection{Geometry of the critical set and regular set.}\label{ss:critical}
For any $z\in \C$, we denote the distance of $z$ to the  abscissa axis,  which is the critical line $\R,$  by $d(z,\R).$

\begin{lem}\label{l:equi} Let $\overline{\mathcal{F}}$ be the closure of the critical set, then $$\overline{\mathcal{F}}=\left\{z\in\C\,:\, \inf_{n\in\N
\cup\{0\}} d(F_{\lambda}^n(z),\R)=0\right\}.$$  \end{lem}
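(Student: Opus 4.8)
The plan is to prove the two inclusions $\overline{\mathcal{F}}\subseteq E$ and $E\subseteq\overline{\mathcal{F}}$ separately, where I abbreviate $E:=\{z\in\C:\inf_{n\in\N\cup\{0\}}d(F_\lambda^n(z),\R)=0\}$. The single idea driving everything is that, since $|\lambda|=1$, the map $F_\lambda$ is a \emph{piecewise isometry}: on $\C^+_0$ it acts as the affine isometry $z\mapsto\lambda(z-1)$ and on $\C^-$ as $z\mapsto\lambda(z+1)$. Hence, as long as a finite orbit segment of two points stays on matching sides of $\R$, the same affine branch is applied to both at every step and their mutual Euclidean distance is preserved. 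I also use throughout that $d(z,\R)=|\mathrm{Im}(z)|$, so controlling the distance of an iterate to $\R$ is the same as controlling how close its imaginary part is to $0$.

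For $\overline{\mathcal{F}}\subseteq E$ the plan is to show that $E$ is closed and contains $\mathcal{F}$; closedness then gives $\overline{\mathcal{F}}\subseteq\overline{E}=E$. The inclusion $\mathcal{F}\subseteq E$ is immediate, since an orbit landing on $\R$ realizes the infimum value $0$. To prove $E$ closed I show its complement is open: if $z\notin E$ then $c:=\inf_n d(F_\lambda^n(z),\R)>0$, and I claim the whole ball $B(z,c)$ lies in $E^c$. This is exactly where the isometry property is used. For $w\in B(z,c)$ with $|w-z|=\rho<c$, an induction on $n$ shows that $F_\lambda^n(w)$ and $F_\lambda^n(z)$ always lie strictly on the same side of $\R$: indeed $|\mathrm{Im}(F_\lambda^n(w))-\mathrm{Im}(F_\lambda^n(z))|\le|F_\lambda^n(w)-F_\lambda^n(z)|=\rho<c\le d(F_\lambda^n(z),\R)$, so the signs agree, the same branch is applied, and the distance $\rho$ is maintained at the next step. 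Consequently $d(F_\lambda^n(w),\R)\ge c-\rho>0$ for all $n$, so $w\in E^c$, proving $B(z,c)\subseteq E^c$.

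For the reverse inclusion $E\subseteq\overline{\mathcal{F}}$ I would prove the contrapositive, i.e. that $z\in\mathcal{U}=\C\setminus\overline{\mathcal{F}}$ implies $z\notin E$. Pick $r>0$ with $B(z,r)\cap\overline{\mathcal{F}}=\emptyset$, so in particular $B(z,r)\cap\mathcal{F}=\emptyset$. Since $B(z,r)$ is connected and disjoint from $\mathcal{F}$, Lemma~\ref{l:con} yields that all of its points share one common itinerary; hence $B(z,r)$ is contained in a single itinerary cell, and by Lemma~\ref{l:convex}(i) the restriction $F_\lambda^n\big\vert_{B(z,r)}$ agrees with one affine isometry of the form $z\mapsto\lambda^n z+b_n$. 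As an isometry it sends $B(z,r)$ onto the ball $B(F_\lambda^n(z),r)$. If some iterate satisfied $d(F_\lambda^n(z),\R)<r$, this image ball would meet $\R$, producing $w\in B(z,r)$ with $F_\lambda^n(w)\in\R$, that is $w\in\mathcal{F}$, contradicting $B(z,r)\cap\mathcal{F}=\emptyset$. Therefore $d(F_\lambda^n(z),\R)\ge r$ for every $n$ and $z\notin E$.

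The one delicate point throughout is that $F_\lambda$, and hence each $F_\lambda^n$, is globally discontinuous across the preimages of $\R$, so one must never be tricked into applying a step that straddles a discontinuity. The whole argument is arranged precisely to avoid this: in the first inclusion the strict inequality $\rho<c$ keeps the shadowing orbit inside the same branch at every iterate, and in the second inclusion Lemma~\ref{l:con} confines $B(z,r)$ to a single itinerary cell on which $F_\lambda^n$ is genuinely affine. I expect this branch-tracking bookkeeping to be the main thing to state carefully; once it is in place, both inclusions reduce to the elementary fact that an isometry carries a ball to a ball of the same radius.
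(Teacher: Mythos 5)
Your proof is correct, and its two halves relate to the paper's argument differently. For the inclusion $\overline{\mathcal{F}}\subseteq E$ (your set $E$ is the paper's $\mathcal{S}$) you follow essentially the same route as the paper: the inductive shadowing step, where the strict inequality $\rho<c\le d(F_{\lambda}^n(z),\R)$ forces $F_{\lambda}^n(w)$ and $F_{\lambda}^n(z)$ onto the same side of $\R$ so that the same isometric branch applies and the gap $\rho$ is preserved, is exactly the paper's induction showing $F_{\lambda}^n(D)\cap\R=\emptyset$ for a disc $D$ of radius $b<a$; you package it as openness of $E^c$ together with $\mathcal{F}\subseteq E$, while the paper states it as $E^c\subseteq\C\setminus\overline{\mathcal{F}}$, but the content is identical. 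For the reverse inclusion $E\subseteq\overline{\mathcal{F}}$ your argument is genuinely different and more elementary: the paper invokes Theorem \ref{t:teoa} to conclude that any $z\in\mathcal{U}$ is periodic, hence has a finite orbit on which the infimum is attained and must be positive; you instead observe that a ball $B(z,r)\subseteq\mathcal{U}$ lies in a single itinerary cell (Lemma \ref{l:con}), so each $F_{\lambda}^n$ maps it isometrically onto $B(F_{\lambda}^n(z),r)$, and this image ball cannot meet $\R$ without producing a point of $\mathcal{F}$ inside $B(z,r)$; hence $d(F_{\lambda}^n(z),\R)\ge r$ for every $n$. This buys two things: it removes the lemma's dependence on Theorem \ref{t:teoa} (and hence on the Goetz--Quas boundedness results), and it works verbatim for any $\lambda$ on the unit circle, not only when $\alpha$ is a rational multiple of $\pi$. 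The paper's version is shorter given that Theorem \ref{t:teoa} is already established at that point in the text, but yours is self-contained and more general.
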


\begin{proof} Let us denote by $\mathcal{S}=\{z\in\C\,:\, \inf_{n\in\N \cup\{0\}}
d(F_{\lambda}^n(z),\R)=0\}$. First, we will show that
$\overline{\mathcal{F}}\subset \mathcal{S}.$ To do this we take $z\notin \mathcal{S}$ and
we will show that $z\notin \overline{\mathcal{F}}.$ If $z\notin \mathcal{S}$ it
follows that $\inf_{n\in\N \cup\{0\}} d(F_{\lambda}^n(z),\R)=a>0.$
Now consider $D$ the open disc centered at $z$ with radius $b<a.$ We
will show that $D\cap \mathcal{F}=\emptyset$ which implies that $z\notin
\overline{\mathcal{F}}.$ To do this we will prove inductively that $D\cap
F_{\lambda}^{-n}(\R)=\emptyset$ for all $n\in \N\cup \{0\}.$ This is
clear for $n=0$ because $d(z,\R)\geq a>b.$ Now assume that $D\cap
F_{\lambda}^{-i}(\R)=\emptyset$ for all $i\in\{0,\ldots,n\}.$ Then
it follows that $F_{\lambda}^{i}(D)\cap\R=\emptyset$ for all
$i\in\{0,\ldots,n\}.$ Observe that $F\vert_{F^n(D)}=F^{n+1}\vert_D$ is a rotation that
in particular preserves the distance, so if $F^{n+1}(D)\cap \R\ne
\emptyset$ then $d(F_{\lambda}^{n+1}(z),\R)<b<a$, which is a contradiction.
Then $F^{n+1}(D)\cap \R= \emptyset,$ and this ends the inductive
proof.

Now  we show that $\mathcal{S}\subset\overline{\mathcal{F}} .$ Assume that $z\notin
\overline{\mathcal{F}}$ and $z\in \mathcal{S}$. Then from Theorem \ref{t:teoa}, $z$ is
periodic. Therefore its orbit is finite and $\inf_{n\in\N \cup\{0\}}
d(F_{\lambda}^n(z),\R)=\min_{n\in\N \cup\{0\}}
d(F_{\lambda}^n(z),\R)=a.$ If $a=0$ then $z\in \mathcal{F}\,:\,$ a
contradiction. So $a>0$ and the disc $D$ centered at $z$ with radius
$b<a$ does not intersects $\mathcal{F}$, a contradiction, again.\end{proof}

We denote by $\mathcal{G}$  the critical set for $F_{\lambda}^{-1}.$
That is
$$\mathcal{G}=\bigcup_{i=1}^{\infty}F^{i}_{\lambda}(\mathbb{R}).$$

\begin{propo}\label{p:coros} The following assertions hold
\begin{enumerate}\item[(a)] If $\overline{\mathcal{F}}\setminus\mathcal{F}\ne \emptyset$ then the elements of
$\overline{\mathcal{F}}\setminus\mathcal{F}$ are aperiodic.
\item[(b)] $\mathcal{G}\subset \overline{\mathcal{F}}.$
\end{enumerate}\end{propo}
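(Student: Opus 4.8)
The plan is to prove both parts by exploiting the characterization of $\overline{\mathcal{F}}$ in Lemma~\ref{l:equi} together with the fact, from Theorem~\ref{t:teoa}, that every point of $\mathcal{U}=\C\setminus\overline{\mathcal{F}}$ is periodic.

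For part (a), I would argue by contradiction. Suppose $z\in\overline{\mathcal{F}}\setminus\mathcal{F}$ is periodic, say of period $m$. Since $z$ is periodic its orbit $\{z,F_\lambda(z),\dots,F_\lambda^{m-1}(z)\}$ is finite, so the infimum appearing in Lemma~\ref{l:equi} is actually a minimum over this finite set, namely $\min_{0\le n<m} d(F_\lambda^n(z),\R)=a$ for some $a\ge 0$. Because $z\in\overline{\mathcal{F}}=\mathcal{S}$, Lemma~\ref{l:equi} forces $a=0$, which means $d(F_\lambda^n(z),\R)=0$ for some $n$, i.e. $F_\lambda^n(z)\in\R$. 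But then $\mathrm{Im}(F_\lambda^n(z))=0$, so by the very definition of the critical set $z\in\mathcal{F}$, contradicting $z\in\overline{\mathcal{F}}\setminus\mathcal{F}$. Hence no such point is periodic; this is essentially the same computation already carried out in the second half of the proof of Lemma~\ref{l:equi}, recycled here.

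For part (b), I want to show $\mathcal{G}=\bigcup_{i=1}^{\infty}F_\lambda^i(\R)\subset\overline{\mathcal{F}}$. Fix $w=F_\lambda^i(r)$ with $r\in\R$ and $i\ge 1$; equivalently $r=F_\lambda^{-i}(w)\in\R$. The natural idea is to realize $w$ as a limit of genuine critical points. The key observation is that $\mathcal{F}=\bigcup_{n\ge 0}F_\lambda^{-n}(\R)$ is, by Lemma~\ref{l:equi}, dense in $\overline{\mathcal{F}}$, so it suffices to produce points of $\mathcal{F}$ arbitrarily close to $w$, or alternatively to verify the criterion of Lemma~\ref{l:equi} directly at $w$. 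For the latter, I would consider the backward orbit: since $F_\lambda^{-i}(w)=r\in\R$, one has $d(F_\lambda^{-i}(w),\R)=0$. The difficulty is that Lemma~\ref{l:equi} is phrased in terms of the \emph{forward} orbit under $F_\lambda$, whereas the relevant degeneracy here occurs backward. I expect this to be the main obstacle, and I would resolve it by taking a small disc $D$ around $w$ and showing that $F_\lambda^{-i}(D)$, a translated-rotated copy of $D$ centred at the point $r\in\R$, necessarily meets $\R$; hence $D$ meets $F_\lambda^{i}(\R)\subset\mathcal{G}$ and, more usefully, $F_\lambda^{-i}(D)\cap\R\ne\emptyset$ shows $D$ contains points whose $i$-th backward iterate lies on $\R$, i.e. $D\cap\mathcal{F}\ne\emptyset$. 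Since this holds for every neighbourhood $D$ of $w$, we conclude $w\in\overline{\mathcal{F}}$.

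A cleaner route to (b), which I would prefer, is to use the symmetry between $F_\lambda$ and $F_\lambda^{-1}$ recorded in the introduction, where $F_\lambda^{-1}(z)=z/\lambda+H(z/\lambda)$ is a map of exactly the same type. One checks that $\overline{\mathcal{F}}$ is invariant under $F_\lambda$ (and hence under $F_\lambda^{-1}$), because Lemma~\ref{l:equi} characterizes $\overline{\mathcal{F}}$ via the forward orbit and applying $F_\lambda$ merely shifts the index $n$ by one, leaving the infimum unchanged. Granting this invariance, note that $\R\subset\mathcal{F}\subset\overline{\mathcal{F}}$ trivially (taking $n=0$ in Lemma~\ref{l:equi}), and therefore $F_\lambda^i(\R)\subset F_\lambda^i(\overline{\mathcal{F}})=\overline{\mathcal{F}}$ for every $i\ge 1$. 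Taking the union over $i$ gives $\mathcal{G}\subset\overline{\mathcal{F}}$, as desired. The only step requiring care is the forward-invariance of $\overline{\mathcal{F}}$, which follows from the re-indexing argument just described once one notes that $d(F_\lambda^n(F_\lambda(z)),\R)=d(F_\lambda^{n+1}(z),\R)$ and that dropping the single term $n=0$ from an infimum that equals zero cannot change it being zero.
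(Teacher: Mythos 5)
Your proof of part (a) is correct and is essentially the paper's own argument: periodicity makes the orbit finite, so the infimum in Lemma~\ref{l:equi} is an attained minimum, which must be $0$, forcing $z\in\mathcal{F}$.

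Part (b) has a genuine gap in both routes you offer, and in both cases it is the same confusion between $\mathcal{F}$ (points whose \emph{forward} orbit meets $\R$, i.e.\ $\bigcup_{n\ge0}F_\lambda^{-n}(\R)$) and $\mathcal{G}$ (points whose \emph{backward} orbit meets $\R$). In your first route, from $F_\lambda^{-i}(D)\cap\R\ne\emptyset$ you conclude that $D$ contains points whose $i$-th backward iterate lies on $\R$ and write ``i.e.\ $D\cap\mathcal{F}\ne\emptyset$''; but such points belong to $F_\lambda^{i}(\R)\subset\mathcal{G}$, not to $\mathcal{F}$, so you have only shown $D\cap\mathcal{G}\ne\emptyset$, which is no progress (the point $w$ itself already witnesses this). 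In your second route, the claimed forward-invariance of $\overline{\mathcal{F}}$ rests on the assertion that dropping the $n=0$ term from an infimum equal to zero cannot change its being zero. That is false precisely in the problematic case: if $z\in\R$ but $\inf_{n\ge1}d(F_\lambda^{n}(z),\R)>0$, then $z\in\overline{\mathcal{F}}$ while $F_\lambda(z)$ fails the criterion of Lemma~\ref{l:equi}. Ruling out such a $z$ is exactly the content of statement (b), so the argument is circular; note also that in the paper the invariance of $\overline{\mathcal{F}}$ (Corollary~\ref{c:inv}) is \emph{deduced from} Proposition~\ref{p:coros}, not available before it. The missing idea is dynamical, not set-theoretic: suppose $w\in\mathcal{G}\cap\mathcal{U}$; by Theorem~\ref{t:teoa} $w$ is periodic, say of period $m$, so its backward orbit coincides with its forward orbit. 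Since $F_\lambda^{-i}(w)\in\R$ for some $i\ge1$, choosing $k$ with $km-i\ge0$ gives $F_\lambda^{km-i}(w)=F_\lambda^{-i}(w)\in\R$, hence $w\in\mathcal{F}$, contradicting $w\in\mathcal{U}$. This is what the paper means by ``due to the bijectivity of $F$''.
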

\begin{proof} (a) Assume, to get a contradiction, that $z\in
\overline{\mathcal{F}}\setminus\mathcal{F}$ is periodic. Therefore its orbit is
finite and $$\inf_{n\in\N \cup\{0\}}
d(F_{\lambda}^n(z),\R)=\min_{n\in\N \cup\{0\}}
d(F_{\lambda}^n(z),\R)=a.$$ From Lemma \ref{l:equi}, $a=0.$ But this
implies that $z\in\mathcal{F}$, a contradiction.

(b) Assume, to get a contradiction, that there exists $z\in
\mathcal{G}\cap \mathcal{U}$. By Theorem \ref{t:teoa},  $z$ is periodic, but due to the bijectivity of $F$, this also implies that $z\in\mathcal{F}$, a contradiction.
\end{proof}

\begin{corol}\label{c:inv} The regular set $\mathcal{U}$ and the closure of the critical set $\overline{\mathcal{F}}$ are invariant sets
(both positively and negatively) for $F_{\lambda}.$ Moreover
$F_{\lambda}\big\vert_\mathcal{U}$ is a pointwise periodic homeomorphism, which 
permutes the connected components of
$U.$
\end{corol}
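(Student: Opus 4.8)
The plan is to establish Corollary~\ref{c:inv} by assembling the structural facts proved earlier in the section, so that each claimed property follows almost formally. Concretely, I would prove in order: (1) invariance of $\mathcal{U}$ and $\overline{\mathcal{F}}$ in both time directions, (2) the pointwise-periodic homeomorphism property of $F_\lambda\vert_\mathcal{U}$, and (3) the permutation of connected components.

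For step~(1), the natural route is to show first that $\overline{\mathcal{F}}$ is positively and negatively invariant, and then deduce the same for its complement $\mathcal{U}$. The key tool is the characterization in Lemma~\ref{l:equi}, namely $\overline{\mathcal{F}}=\{z:\inf_{n\ge 0}d(F_\lambda^n(z),\R)=0\}$. Since $F_\lambda$ restricted to either half-plane is an isometry (a rotation), I would argue that the quantity $\inf_{n\ge 0}d(F_\lambda^n(z),\R)$ is essentially unchanged when one shifts the orbit forward or backward by one step: if $z\in\overline{\mathcal{F}}$ then the orbit of $F_\lambda(z)$ is a tail of the orbit of $z$, so its infimum is still $0$, giving $F_\lambda(z)\in\overline{\mathcal{F}}$; conversely, to handle $F_\lambda^{-1}(z)$ one uses that prepending the single value $d(z,\R)$ to a sequence whose infimum is $0$ leaves the infimum $0$, together with the fact that $F_\lambda^{-1}$ is a piecewise isometry as well (recall $F_\lambda^{-1}(z)=z/\lambda+H(z/\lambda)$). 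This yields $F_\lambda(\overline{\mathcal{F}})=\overline{\mathcal{F}}$, and by complementation $F_\lambda(\mathcal{U})=\mathcal{U}$ in both directions, since $F_\lambda$ is a bijection of $\C$. Alternatively, one can invoke Proposition~\ref{p:coros}(b): $\mathcal{G}\subset\overline{\mathcal{F}}$ gives $F_\lambda^i(\R)\subset\overline{\mathcal{F}}$ for $i\ge 1$, which combined with $\mathcal{F}\subset\overline{\mathcal{F}}$ furnishes two-sided invariance of $\overline{\mathcal{F}}$ directly.

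For step~(2), once $\mathcal{U}$ is invariant I would note that $F_\lambda\vert_\mathcal{U}$ is a continuous bijection of $\mathcal{U}$ onto itself whose inverse $F_\lambda^{-1}\vert_\mathcal{U}$ is also continuous (both are restrictions of globally continuous-on-each-half-plane maps, and on $\mathcal{U}$ no orbit ever meets the discontinuity line, so no issues arise at $\R$); hence it is a homeomorphism. The pointwise-periodicity is immediate from Theorem~\ref{t:teoa}: every point of $\mathcal{U}$ is periodic, so $F_\lambda\vert_\mathcal{U}$ is a pointwise periodic homeomorphism.

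For step~(3), I would use that a homeomorphism of a space permutes the connected components of that space, because homeomorphisms preserve connectedness and the component structure; since $F_\lambda\vert_\mathcal{U}$ is a homeomorphism of $\mathcal{U}$, it sends each connected component onto a connected component, and by Theorem~\ref{t:teoa} (each component is periodic, and $F_\lambda$ cyclically permutes the $\ell$ components in a given orbit) this permutation is well-defined and exhaustive. The main obstacle, if any, is the careful justification of invariance in the \emph{backward} direction, where one must confirm that $F_\lambda^{-1}$ preserves the defining infimum condition of $\overline{\mathcal{F}}$; everything else is a formal consequence of Lemma~\ref{l:equi}, Theorem~\ref{t:teoa}, and the bijectivity and piecewise-isometric nature of $F_\lambda$.
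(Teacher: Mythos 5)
Your proof is correct in substance, but it reaches the invariance of $\overline{\mathcal{F}}$ by a genuinely different route than the paper. The paper works with the complement: it shows $F_{\lambda}(\mathcal{U})\cap\mathcal{F}=\emptyset$ directly from the definition of $\mathcal{F}$ as a union of preimages of $\R$, and then rules out $F_{\lambda}(\mathcal{U})$ meeting $\overline{\mathcal{F}}\setminus\mathcal{F}$ by the periodic/aperiodic dichotomy (Theorem~\ref{t:teoa} makes every point of $\mathcal{U}$ periodic, Proposition~\ref{p:coros}(a) makes every point of $\overline{\mathcal{F}}\setminus\mathcal{F}$ aperiodic), concluding $F_{\lambda}(\mathcal{U})\subset\mathcal{U}$ and passing to the complement. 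You instead attack $\overline{\mathcal{F}}$ head-on via the metric characterization of Lemma~\ref{l:equi}. Both arguments ultimately rest on Theorem~\ref{t:teoa} (your Lemma~\ref{l:equi} uses it in its own proof), so neither is logically lighter; your version has the merit of making the backward invariance of $\overline{\mathcal{F}}$ transparent (prepending one term to a sequence with infimum $0$ keeps the infimum $0$), whereas the paper's ``the same considerations hold for $F_{\lambda}^{-1}$'' quietly requires $\mathcal{G}\subset\overline{\mathcal{F}}$, i.e.\ Proposition~\ref{p:coros}(b). One small imprecision in your forward direction: the infimum of a tail of a sequence can strictly exceed the infimum of the whole sequence, and this happens here exactly when the infimum $0$ is witnessed only by the initial term, i.e.\ when $z\in\R$; in that case $F_{\lambda}(z)\in F_{\lambda}(\R)\subset\mathcal{G}$ and you must invoke Proposition~\ref{p:coros}(b) to place it in $\overline{\mathcal{F}}$. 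You do list this as an ``alternative,'' but it is in fact a necessary supplement to the tail argument rather than an independent option. Steps (2) and (3) of your write-up coincide with the paper's treatment (continuity of $F_{\lambda}\big\vert_{\mathcal{U}}$ and $F_{\lambda}^{-1}\big\vert_{\mathcal{U}}$ because $\mathcal{U}$ avoids both critical sets, pointwise periodicity from Theorem~\ref{t:teoa}, and the standard fact that a homeomorphism permutes connected components).
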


\begin{proof} By definition $F_{\lambda}(\mathcal{U})\cap \mathcal{F}=\emptyset.$ On the other hand, as a consequence of Theorem \ref{t:teoa} and Proposition \ref{p:coros} (a), 
$F_{\lambda}(\mathcal{U})\cap
\left(\overline{\mathcal{F}}\setminus\mathcal{F}\right)=\emptyset.$ Then
$F_{\lambda}(\mathcal{U}) \subset \mathcal{U}.$  The same considerations holds for
$F_{\lambda}^{-1}$ and therefore $\mathcal{U}$ is positively and negatively
invariant. Since $\overline{\mathcal{F}}$  is the complement of $\mathcal{U},$ the same
holds for it. Lastly, since $\mathcal{U}$ is disjoint from the
critical set of $F_{\lambda}$ and $F_{\lambda}^{-1}$ it follows that
$F_{\lambda}\big\vert_\mathcal{U}$ and $F_{\lambda}^{-1}\big\vert_\mathcal{U}$ are both continuous maps.\end{proof}

The next result, characterizes each connected component of the regular set by its (periodic) itinerary:

\begin{corol}\label{c:dif} Let $V$ be a connected component of $\mathcal{U}.$ Let
$I$ be a finite-length itinerary such that for all $z\in V$  we get $\underline I(z)=I^{\infty}.$
Then $V=B(I^{\infty}).$ In particular the connected components of
$\mathcal{U}$ are convex.\end{corol}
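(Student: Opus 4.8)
The plan is to show the two inclusions $V\subseteq B(I^\infty)$ and $B(I^\infty)\subseteq V$, using the fact that $V$ is a connected component of the open set $\mathcal{U}$ and that both sets carry the single itinerary $I^\infty$. First I would establish the easy inclusion. By hypothesis every $z\in V$ satisfies $\underline I(z)=I^\infty$, and since $V\subset\mathcal{U}$ by assumption, each such $z$ lies in $\{w\in\mathcal{U}:\underline I(w)=I^\infty\}=B(I^\infty)$ by the very definition of $B(\cdot)$ given in Section~\ref{ss:itineraries}. Hence $V\subseteq B(I^\infty)$ immediately.

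For the reverse inclusion, the key observation is that $B(I^\infty)$ is itself a connected subset of $\mathcal{U}$ that meets $V$. Connectedness is exactly the content of Proposition~\ref{p:itiper} (or Lemma~\ref{l:convex}~(ii)): since $I^\infty$ is periodic, $B(I^\infty)$ is convex, hence connected. Moreover $B(I^\infty)\subset\mathcal{U}$ by definition, and it is nonempty because $V\neq\emptyset$ already supplies points of $B(I^\infty)$, so $V\cap B(I^\infty)\neq\emptyset$. Now $V$ is a connected component of $\mathcal{U}$, i.e. a maximal connected subset of $\mathcal{U}$; a connected subset of $\mathcal{U}$ that intersects the component $V$ must be entirely contained in $V$. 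Applying this to the connected set $B(I^\infty)$ yields $B(I^\infty)\subseteq V$, completing the two inclusions and giving $V=B(I^\infty)$.

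The convexity of the connected components then follows at once: we have just identified $V$ with $B(I^\infty)$, and $B(I^\infty)$ is convex by Lemma~\ref{l:convex}~(ii), being the set of points with a fixed (periodic, hence in particular infinite) itinerary. I should make sure the hypothesis is consistently used, namely that such a finite-length itinerary $I$ with $\underline I(z)=I^\infty$ for all $z\in V$ indeed exists; this is guaranteed by Lemma~\ref{l:con} (all points of the connected set $V$, disjoint from $\mathcal{F}$, share one itinerary) together with Proposition~\ref{p:itiper} (that itinerary is periodic because the points of $V$ are periodic by Theorem~\ref{t:teoa}).

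The only genuinely delicate point, and the one I would state carefully, is the maximality argument for connected components: that a connected set meeting a connected component is contained in it. This is a standard topological fact, but it is the crux here, and it relies on $B(I^\infty)$ being connected (via convexity) rather than merely on its sharing the itinerary. Everything else is bookkeeping with the definitions of $B(I^\infty)$ and $\mathcal{U}$, so I anticipate no computational obstacle; the proof is short and essentially a synthesis of Lemma~\ref{l:convex}, Proposition~\ref{p:itiper}, and Lemma~\ref{l:con}.
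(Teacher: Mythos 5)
Your proof is correct and follows essentially the same route as the paper: the forward inclusion is definitional, and the reverse inclusion uses that $B(I^{\infty})$ is convex (Lemma \ref{l:convex}~(ii)), hence connected, and meets the component $V$, so maximality forces $B(I^{\infty})\subseteq V$. The only divergence is that the paper separately verifies $B(I^{\infty})\cap\overline{\mathcal{F}}=\emptyset$ via Proposition \ref{p:coros}~(a) before concluding $B(I^{\infty})\subset\mathcal{U}$, whereas you obtain this directly from the definition of $B(\cdot)$ in Section \ref{ss:itineraries} (which already restricts to $z\in\mathcal{U}$); that shortcut is legitimate as the definition is written, and the paper's extra step would only be needed under the broader reading of $B$ as all points of $\C\setminus\mathcal{F}$ with the given itinerary.
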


\begin{proof} Clearly $V\subset B(I^{\infty}).$ On the other hand
$B(I^{\infty})\cap\overline{\mathcal{F}}=\emptyset,$ because by definition it does
not contains points in $\mathcal{F}$, and the points in
$\overline{\mathcal{F}}\setminus\mathcal{F}$ are not periodic (Proposition \ref{p:coros}). Therefore
$B(I^{\infty})\subset \mathcal{U}.$ By Lemma \ref{l:convex} (ii), the set $B(I^{\infty})$ is convex, so it is also connected and, in consequence, it is contained only in one connected component of $\mathcal{U}.$ Then
$B(I^{\infty})\subset V.$\end{proof}

The following Proposition summarizes some known results about convex
planar sets that we  are going to use, see for instance \cite[Part I, Chapter 1]{S}.

\begin{propo}\label{p:convex} Let $A$ be a convex open planar set and $\partial A$ its boundary. Then the
following assertions hold:
\begin{enumerate}
\item[(a)] For any point $z\in
A$ and any point $w\in \partial A$, the segment joining both points
is entirely contained in $A$, except for the point $w.$

\item[(b)]
$\partial A$ is the union of a countable set of ${\cal C}^1$-open
arcs and their endpoints.
\item[(c)] If $z$ belongs to one of the arcs in statement (b), then the tangent line to $\partial A$ at $z$ does not cut $A.$
\item[(d)] If $\partial A$ is a polygon, then it contains at most two
segments with the same slope.
\end{enumerate}
\end{propo}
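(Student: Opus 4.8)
The plan is to establish the four assertions in the order stated, since each is a classical fact about convex sets: (c) and (d) lean on the supporting-line theorem, while (b) isolates the only point requiring real care. Throughout I use that $A$ open and convex means $A=\operatorname{int}(\overline A)$ and that $\partial A=\overline A\setminus A$.

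For (a) I would fix $z\in A$ and $w\in\partial A$ and show the half-open segment $[z,w)$ lies in $A$. Since $A$ is open there is $\rho>0$ with the open ball $B(z,\rho)\subset A$. Fix $t\in[0,1)$ and set $p_t=(1-t)z+tw$. As $w\in\overline A$, pick $w'\in A$ with $|w'-w|$ small. By convexity every point $(1-t)u+tw'$ with $u\in B(z,\rho)$ lies in $A$, and as $u$ ranges over the ball this set is exactly the open ball $B((1-t)z+tw',(1-t)\rho)$. Taking $w'$ close enough to $w$ makes this ball contain $p_t$, whence $p_t\in A$. The endpoint $w$ itself lies in $\partial A$, not in $A$, giving the stated `except for $w$'.

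For (b) I would argue locally. After a rotation, near a boundary point $\partial A$ is the graph of a convex function $\varphi$ on an interval. Such a $\varphi$ possesses one-sided derivatives everywhere, both monotone nondecreasing, which agree except on a countable set $C$ of corner points; where they agree, monotonicity forces $\varphi'$ to be continuous there. Hence on each component of the complement of $C$ the graph is a $C^1$ arc, and covering $\partial A$ by countably many such local graphs and collecting all corners yields the decomposition of $\partial A$ into countably many $C^1$ open arcs plus their (corner) endpoints. This regularity-and-countability step, resting on the structure of one-sided derivatives of convex functions and on the fact that a monotone function has at most countably many jumps, is the only part needing genuine care.

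For (c) I would invoke the supporting-line theorem: through each point of $\partial A$ there passes a line having $A$ entirely on one open side. At a point $z$ lying on a $C^1$ arc the boundary has a unique tangent line $T$; any supporting line at $z$ must be tangent there, so it coincides with $T$. Thus $T$ is a supporting line, which is exactly the assertion that $T\cap A=\emptyset$, i.e. $T$ does not cut $A$. Finally, for (d), I would use that a convex polygon has at most one edge with any prescribed outward normal direction: two distinct edges with a common outward normal would lie on a single supporting line, forcing them to be collinear (hence one edge) or one of them to lie strictly inside $A$. Since each slope corresponds to precisely the two opposite outward normal directions, at most two edges can share that slope.
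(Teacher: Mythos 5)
The paper offers no proof of this proposition at all: it is presented as a summary of known facts with a pointer to Santal\'o, so any argument you supply is by construction a different route. Your proofs of (a), (c) and (d) are correct and standard. For (a), the computation with the ball $B\bigl((1-t)z+tw',(1-t)\rho\bigr)$ is exactly the usual ``interior point plus closure point'' lemma. For (c) and (d), the supporting-line theorem, the observation that a supporting line at a point of differentiability must be the tangent, and the uniqueness of the supporting line with a prescribed outward normal do the job.

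Part (b) contains a genuine gap. The reduction to a convex function $\varphi$ and the countability of its corner set $C$ are fine, but the step ``on each component of the complement of $C$ the graph is a $C^1$ arc'' fails when $C$ is dense in the base interval, and this can happen: for $\varphi(x)=\sum_{n}2^{-n}\,|x-q_n|$, with $(q_n)$ an enumeration of the rationals, the open convex set $A=\{(x,y):y>\varphi(x)\}$ has a corner at every rational abscissa, the components of the complement of $C$ are singletons, and no nondegenerate sub-arc of $\partial A$ is $C^1$. So your argument does not yield the claimed decomposition --- and in fact no argument can, because for this $A$ assertion (b) is false as literally stated: the countably many ``endpoints'' cannot cover the uncountably many non-corner points, and every nondegenerate arc of $\partial A$ contains corners. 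What is true for a general open convex planar set is only that the tangent line exists off a countable subset of the boundary. This does not damage the paper, since the proposition is only ever applied to connected components of $\mathcal{U}$, whose boundaries are shown (using the structure of $\mathcal{F}$) to be polygons; but a correct general statement of (b) requires either an extra hypothesis (for instance that the set of non-smooth boundary points is discrete, or that $\partial A$ is a priori piecewise $C^1$) or a weakening to the mere countability of the non-differentiability set. You should flag this rather than present the decomposition as a theorem about arbitrary convex open sets.
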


\begin{lem}\label{l:front} The boundary of a connected component of $\mathcal{U}$ is contained in
$\mathcal{F}.$\end{lem}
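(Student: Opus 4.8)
The plan is to prove the contrapositive-flavored statement that no boundary point of $V$ can lie in $\overline{\mathcal F}\setminus\mathcal F$, the key mechanism being that on the interior of $V$ the iterates of $F_\lambda$ coincide with a single finite-order affine rotation that extends to the whole plane, and that this forces any nearby boundary point whose orbit avoids $\R$ to be periodic. First I would record the routine topological fact that $\partial V\subset\overline{\mathcal F}$: since $\mathcal U=\C\setminus\overline{\mathcal F}$ is open and $V$ is one of its connected components, $V$ is open, and a point $x\in\partial V\cap\mathcal U$ would have a ball $N\subset\mathcal U$ around it meeting $V$, so $N\cup V$ would be a connected subset of $\mathcal U$ strictly larger than $V$, contradicting maximality. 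Hence $\partial V\subset\C\setminus\mathcal U=\overline{\mathcal F}$, and it suffices to fix $w\in\partial V$ and derive a contradiction from the assumption $w\notin\mathcal F$.

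Next I would set up the affine machinery. By Lemma~\ref{l:con} all points of $V$ share one itinerary, which is periodic by Theorem~\ref{t:teoa} (a periodic point has periodic itinerary), say $\underline I=(s_1\cdots s_\ell)^\infty$, and $V=B(\underline I)$ by Corollary~\ref{c:dif}. Writing $F_\lambda^{+}(z)=\lambda(z-1)$ and $F_\lambda^{-}(z)=\lambda(z+1)$ for the two affine branches, the compositions
$$G_i:=F_\lambda^{s_i}\circ\cdots\circ F_\lambda^{s_1}$$
are affine maps defined on all of $\C$ and satisfy $F_\lambda^i\big\vert_V=G_i\big\vert_V$. By Lemma~\ref{l:convex}~(i) and the proof of Proposition~\ref{p:itiper}, $G_\ell\big\vert_V(z)=\lambda^\ell z+b$ is a rotation $R$ of finite order $k$; since $G_\ell$ and $z\mapsto\lambda^\ell z+b$ are affine and agree on the nonempty open set $V$, they coincide on all of $\C$. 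The periodicity of $\underline I$ gives $G_{i+\ell}=G_i\circ G_\ell$, hence $G_{k\ell}=R^{k}=\mathrm{Id}$ as a map of the whole plane.

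The core step is to propagate the itinerary to $w$. I would pick $z_j\in V$ with $z_j\to w$; as each $G_i$ is continuous, $G_i(w)=\lim_j F_\lambda^i(z_j)$. Because $w\notin\mathcal F$ the entire forward orbit of $w$ avoids $\R$, and I would show by induction on $i$ that $F_\lambda^i(w)=G_i(w)\in\C^{s_{i+1}}$: indeed $F_\lambda^i(z_j)\in\C^{s_{i+1}}$ forces $G_i(w)\in\overline{\C^{s_{i+1}}}$, and since $F_\lambda^i(w)=G_i(w)\notin\R$ this point lies in the open half-plane $\C^{s_{i+1}}$, so it has address $s_{i+1}$ and the next application of $F_\lambda$ uses exactly the branch $F_\lambda^{s_{i+1}}$, yielding $F_\lambda^{i+1}(w)=G_{i+1}(w)$. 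Thus $w$ has itinerary $\underline I$ and $F_\lambda^{k\ell}(w)=G_{k\ell}(w)=w$, so $w$ is periodic. This contradicts Proposition~\ref{p:coros}~(a), according to which the points of $\overline{\mathcal F}\setminus\mathcal F$ are aperiodic; therefore $w\in\mathcal F$, proving $\partial V\subset\mathcal F$.

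I expect the main obstacle to be precisely this inductive step, since $F_\lambda$ is discontinuous along $\R$ and one cannot naively pass to the limit in $F_\lambda^i(z_j)\to F_\lambda^i(w)$. The argument circumvents the discontinuity by working with the globally defined affine branches $G_i$ and exploiting the hypothesis $w\notin\mathcal F$ to ensure the orbit never meets the discontinuity line, so that at each stage a single branch is selected and agrees with $G_i$; the approximation $z_j\to w$ is used only to identify which open half-plane contains $F_\lambda^i(w)$. Everything else (the topological reduction and the affine identity $G_{k\ell}=\mathrm{Id}$) is routine given the earlier results.
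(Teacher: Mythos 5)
Your proof is correct, and it follows the same overall strategy as the paper: reduce to showing that a boundary point $w\notin\mathcal F$ would share the periodic itinerary of $V$ and hence be periodic, contradicting Proposition~\ref{p:coros}~(a). Where you differ is in how the itinerary is propagated to $w$. The paper uses the convexity of $V$ (via Corollary~\ref{c:dif} and Proposition~\ref{p:convex}~(a)) to join an interior point to $w$ by a segment $L$ that misses $\mathcal F$, and then applies Lemma~\ref{l:con} to $L$; you instead approximate $w$ by points of $V$ and run an explicit induction with the globally defined affine branches $G_i$, using $w\notin\mathcal F$ to select the correct open half-plane at each step. Your route is essentially a reproof of Lemma~\ref{l:con} for the connected set $V\cup\{w\}$ (to which that lemma could have been applied directly, since $V\cup\{w\}$ is connected and disjoint from $\mathcal F$), so it is slightly longer than necessary, but it buys two things: it does not need the convexity of $V$ at all, and it makes fully explicit the final step that $w$ is periodic, via $G_{k\ell}=\mathrm{Id}$ on all of $\C$ --- a point the paper passes over with a terse ``as a consequence, $z$ is periodic,'' which implicitly extends the argument of Proposition~\ref{p:itiper} to a point not lying in $\mathcal U$.
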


\begin{proof} Let $V$ be a connected component of $\mathcal{U}$ and denote its
boundary by $\partial V.$ Since $V$ is open $\partial V\subset
\overline{\mathcal{F}}.$ Let $z\in \partial V$ and assume, to obtain a
contradiction, that   $z\in \overline{\mathcal{F}}\setminus\mathcal{F}.$ Pick a point $z'\in V$ and
consider $L$ be the segment that joints $z'$ and $z.$ By the
convexity of $V$ it follows that all points in $L$, except $z$,
belong to $V$ (Proposition \ref{p:convex} (a)). Since $z\notin \mathcal{F}$, by Lemma \ref{l:con} all points in $L$, including $z$, have the same itinerary. From
Theorem \ref{t:teoa}, it follows that $z'$ is periodic
and hence its itinerary is also periodic. So 
the itinerary of $z$ is periodic as well, and as a consequence, 
$z$ is periodic. This fact contradicts Proposition~\ref{p:coros} (a).~\end{proof}

\subsection{Proof of Theorem \ref{t:teob}}\label{ss:proofofTB}

\begin{proof}[Proof of Theorem \ref{t:teob}] (a) Let $z\in \partial V.$ From Lemma \ref{l:front}, $z$
belongs to $\mathcal{F}.$ Therefore there exists a first iterate $i$ such that
$z$ belongs to $F_{\lambda}^{-i}(\R).$ At this point note that
$F_{\lambda}^{-i}(\R)$ is a finite union of segments
all of them with the same slope. The argument of this slope belongs to the set
$\Theta=\left\{0,\frac{2\pi}{q},\ldots,\frac{(q-1)2\pi}{q}\right\}.$ So there
are $q$ possible slopes in the case $q$ odd and only $\frac {q}{2}$
in the even case. Note also that for each iterate $i$ the number of
endpoints of this segments is finite and then the set of points in
$\mathcal{F}$ that are not contained in the interior of one segment in
$\mathcal{F}$ is countable.

From Proposition \ref{p:convex} (b), $\partial V$ is a
countable union of ${\cal C}^1$-open arcs and their endpoints. We will show that each of
these arcs is a segment with slope in $\Theta.$ Pick a point $z$ in
the interior of one of these arcs not belonging to the countable set
of extremal points in $\mathcal{F}.$ In this point, the tangent line to $\partial V$ is well defined 
and it (locally) belongs to the interior of a
segment in $\mathcal{F}.$ It is clear that this segment must coincide
with the tangent. If not, this segment crosses $\partial V\,:\,$ a
contradiction. So almost all points (all except a countable set) have
a tangent with slope in $\Theta.$ Since the slope varies
continuously in each arc, it follows that it is constant and each
arc is a segment with slope in $\Theta.$ So $\partial V$ is a
polygon. Now the result about the number of sides follows from
Proposition \ref{p:convex} (d).

(b) If $(\ell,q)=1$ we get that $F^\ell_{\lambda}$ is a rotation with
order $q$ that leaves $V$ invariant. So the only possibility is that either $q$ is even and
$\partial V$ has $q$ sides, or $q$ is odd and $\partial V$ has $2q$
sides. In the first case it follows that each side has the same
length and also each angle of the polygon must be equal.

(c) In order to prove the existence of connected components of $\mathcal{U}$ which are non-regular polygons, we consider $\alpha=11\pi/6$ (that is, $\alpha=2\pi p/q$ with $p=11$ and $q=12$), and we consider the non-regular hexagon 
\begin{align*}
H:=& \Bigg \langle
\left(2,0\right),\, \left(\frac{\sqrt{3}+3}{2},0\right), \left(\frac{\sqrt{3}+3}{2},\frac{\sqrt{3}-1}{2}\right), \left(\frac{\sqrt{3}+7}{4},\frac{\sqrt{3}+1}{4}\right), \left(\frac{\sqrt{3}+2}{2},\frac{1}{2}\right), \\
&\left(\frac{\sqrt{3}+5}{4},\frac{\sqrt{3}-1}{4}\right)\bigg \rangle,
\end{align*}
 where $\langle\,\rangle$ stands for the convex hull.  The center of $H$ is the point $C=\left(\frac{\sqrt{3}}{3}+\frac{3}{2},\frac{\sqrt{3}}{6}\right)\in\mathcal{U}$, which is a $20$-periodic point.


Carefully keeping track the iterates of the segment joining the points $\left(2,0\right)$ and $\left(\frac{\sqrt{3}+3}{2},0\right)$ which belong to $\mathcal{F}$, we obtain that all the segments of the boundary of $H$, belong to $\mathcal{F}$.

By inspection, we have that the interior of this hexagon, $H^{\mathrm{o}}$, does not cuts the discontinuity line. The maps $F^+$ and $F^-$,  respectively, are rotations of angle $\alpha=11\pi/6$ around the centers of the upper and lower big dodecagons depicted in Figure \ref{f:2}, respectively. Hence, the images by $F$ of the interior of the hexagon, $H_n^{\mathrm{o}}=F^n(H^{\mathrm{o}})$, evolve describing a rotation around the dodecagon which corresponds with the address of the center of $H_n^{\mathrm{o}}$. As a consequence, one can check that $H_{20}^{\mathrm{o}}=H^\mathrm{o}$  and that for $n=0,\ldots,19$, $H_{n}^{\mathrm{o}}\cap \mathbb{R}=\emptyset$. As a consequence, these $20$ open irregular hexagons are an invariant subset of $\mathcal{U}$ under the action of $F$.\end{proof}

\begin{figure}[H]
	\centerline{\includegraphics[scale=0.34]{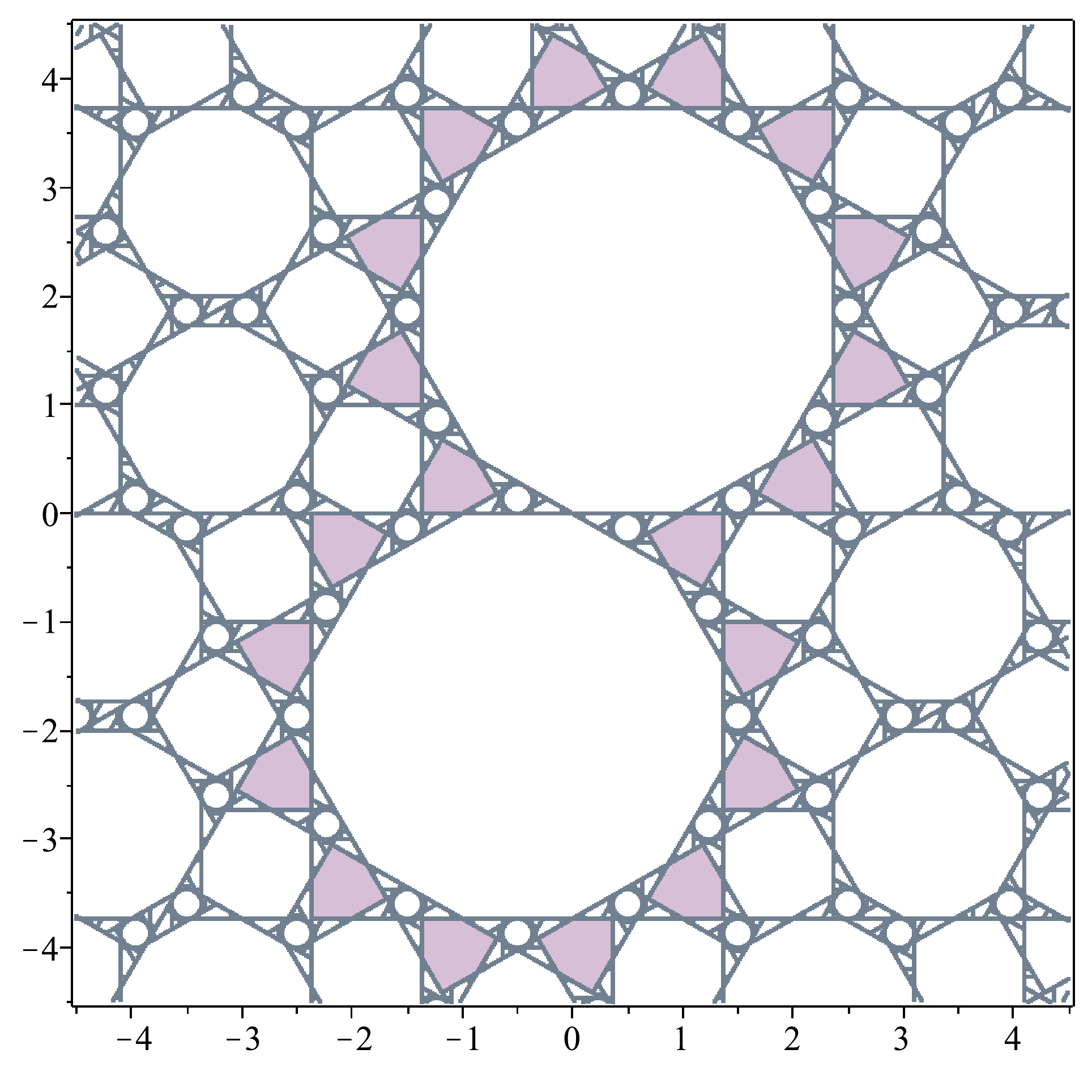}}
	\caption{The 20-periodic irregular hexagons associate to the hexagon $H$, for $\alpha=11\pi/6$.}\label{f:2}
\end{figure}

\section{Some further evidences.}\label{s:evidences}

One of the main differences between the special cases studied in \cite{BG,ChGQ,CGMM, GQ}, in which $\alpha\in\mathcal{R}= \{\pi/{3},\pi/{2},2\pi/{3},4\pi/3,3\pi/2, 5\pi/3\}$, and the cases
$\alpha=2\pi\frac{p}{q}\notin\mathcal{R}$, is the apparent fractalization of both $\mathcal{F}$ and $\mathcal{U}$ (see Figure \ref{f:3}, for instance), and the unboundedness of the periods of the periodic orbits that can be found in compact sets. In this final section, we present some particular evidences in this direction.

\begin{figure}[H]
\centerline{\includegraphics[scale=0.36]{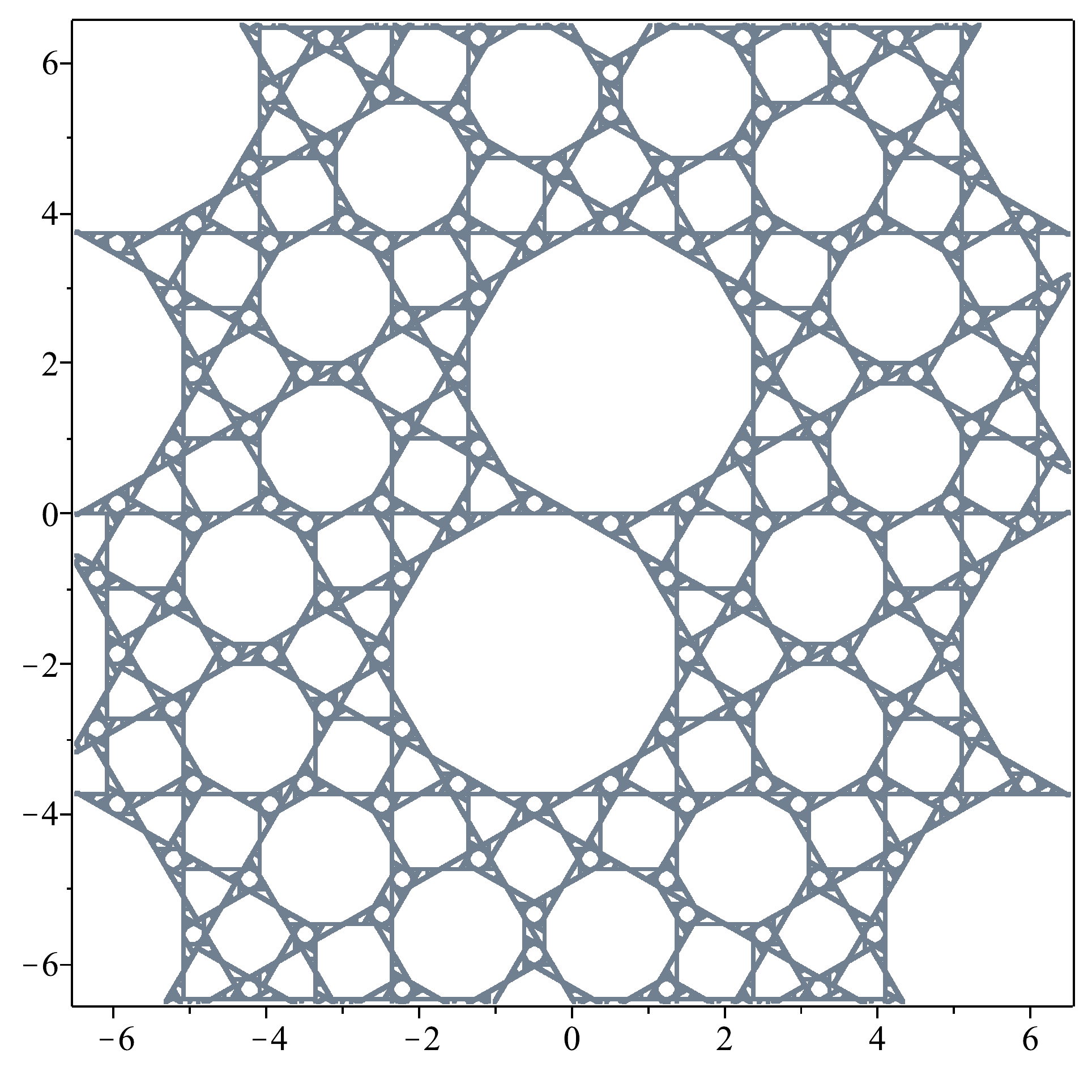}\quad \includegraphics[scale=0.36]{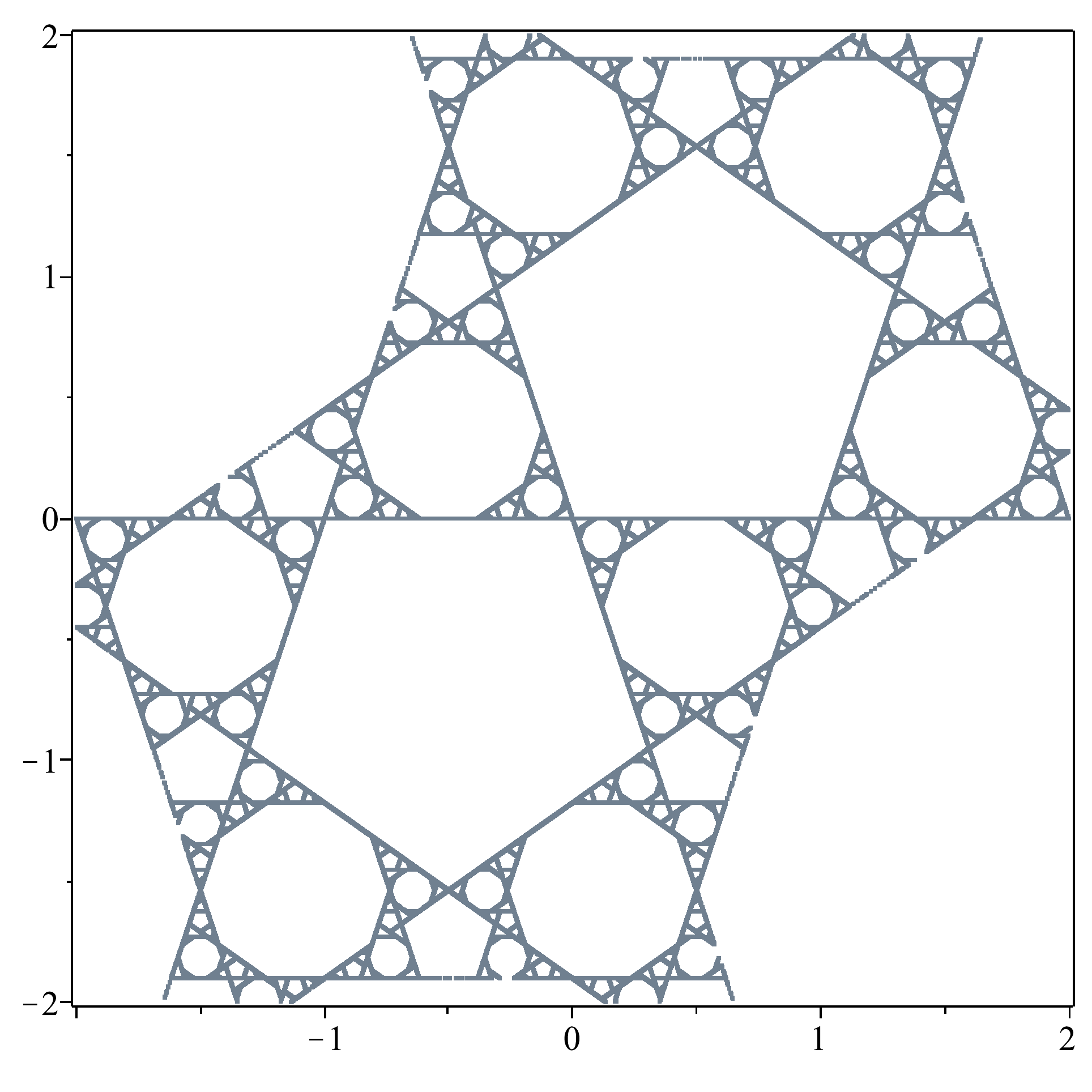}}
\caption{Apparent fractalization of $\mathcal{F}$ and $\mathcal{U}$ when $\alpha=2\pi\frac{p}{q}\notin\mathcal{R}$, for the cases $\alpha=\frac{11\pi}{6}$ and $\alpha=\frac{8\pi}{5}$, respectively.}\label{f:3}
\end{figure}

As en example, for $\alpha=8\pi/5$, by using geometrical arguments we found a scale factor of $1/\varphi^3$, where $\varphi=(1+\sqrt{5})/2$ is the golden ratio, between the triangle contained in $\mathcal{F}$, defined by the points
$$Q=(-\varphi,0),\, R=\left(\frac{1}{2},\frac{(1+2\varphi)\varphi\sqrt{\varphi+2}}{2}\right) \mbox{ and }S=(1+\varphi,0)$$
and, seemingly, two infinite sequence of nested triangles in both left and right directions. More precisely, the left sequence of nested triangles are obtained using the rescaling $r(x,y)=\left((2\varphi-3)x+2-2\varphi,(2\varphi-3)y\right)$ (notice that $1/\varphi^3=2\varphi-3$). Starting by $\triangle QRS$  and obtaining the sequence $\triangle QR_iS_i$ where  $R_i=r^i(R)$ and $S_i=r^i(S)$. See Figure \ref{f:4}.

\begin{figure}[H]
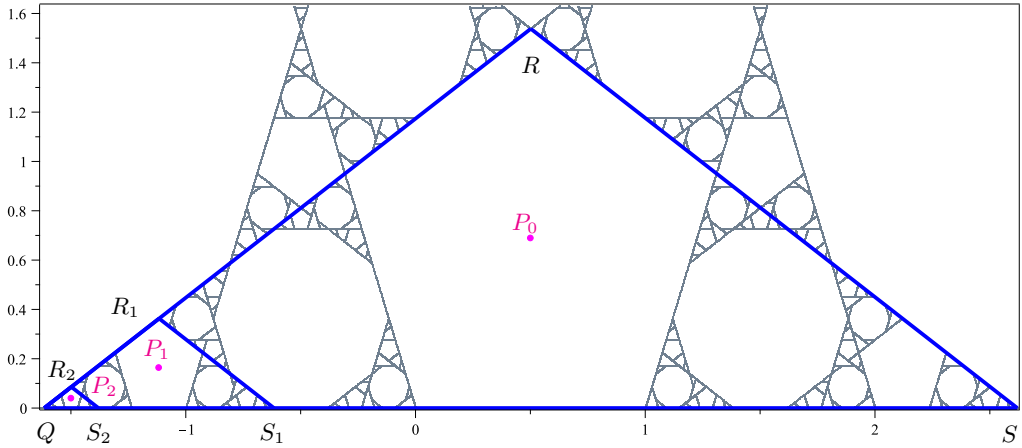

\footnotesize
  \centering
  \begin{lpic}[l(2mm),r(2mm),t(2mm),b(2mm)]{fractal1(0.7)} 
      \lbl[c]{100,45;{\color{magenta} $P_0$}}
     \lbl[c]{30,21;{\color{magenta} $P_1$}}
    \lbl[c]{20,14;{\color{magenta} $P_2$}}
    \lbl[c]{9,5; $Q$}
    \lbl[c]{101,75; $R$}
    \lbl[c]{24,29; $R_1$}
    \lbl[c]{12,17; $R_2$}
     \lbl[c]{192,5; $S$}
       \lbl[c]{52,5; $S_1$}
         \lbl[c]{19,5; $S_2$}
    \end{lpic}
  \caption{Sequence of nested triangles defined by the critical curves for $\alpha=\frac{8\pi}{5}$. In blue, $\triangle QRS$, $\triangle QR_1S_1$ and $\triangle QR_2S_2$. In magenta, the periodic points $P_0,P_1$ and $P_2$.}\label{f:4}
\end{figure}

The rescaling $r$ allows to obtain a seemingly infinite sequence of periodic points with unbounded periods in a compact set of $\mathcal{U}$.  Indeed, if we consider the fixed point $P_0=\left(\frac{1}{2},\frac{1}{10}\sqrt{\left(2+\varphi \right)^{3}}\right)$, which is the center of the pentagon in Figure \ref{f:4}, and we consider the recurrence
$P_{n+1}=r(P_{n})$, we obtain a sequence of centers of nested pentagons whose periods seems to monotonically increase, see also Figure \ref{f:4}.

\begin{table}[h]
	\centering
	\begin{tabular}{|c|c|c|c|c|c|c|c|c|c|c|}
		\hline
		$P_n=r^n(P_0)$ & $P_0 $ & $P_1$ & $P_2 $ & $P_3$ & $P_4 $ & $P_5$ & $P_6 $ & $P_7$ & $P_8 $ & $P_9$\\
		\hline
		Period & $1$ & $7$ & $38$& $232$  & $1338$ & $8332$ & $49988$ &$299932$ & $1799588$ & $10797532$\\
		\hline
	\end{tabular}
	\caption{First periods of the centers of the pentagons in Figure \ref{f:4}.}\label{t:1}
\end{table}

For instance, $P_1=r(P_0)$ is the center of a second pentagon and it is $7$-periodic, which induces a $7$-periodic inter-tile dynamics. Its itinerary map is a $5$-order rotation, hence there exists $7$ pentagons filled by $35$-periodic orbits. See Figure \ref{f:6}. The first periods we encounter are given in Table \ref{t:1}


\begin{figure}[H]
\centerline{\includegraphics[scale=0.35]{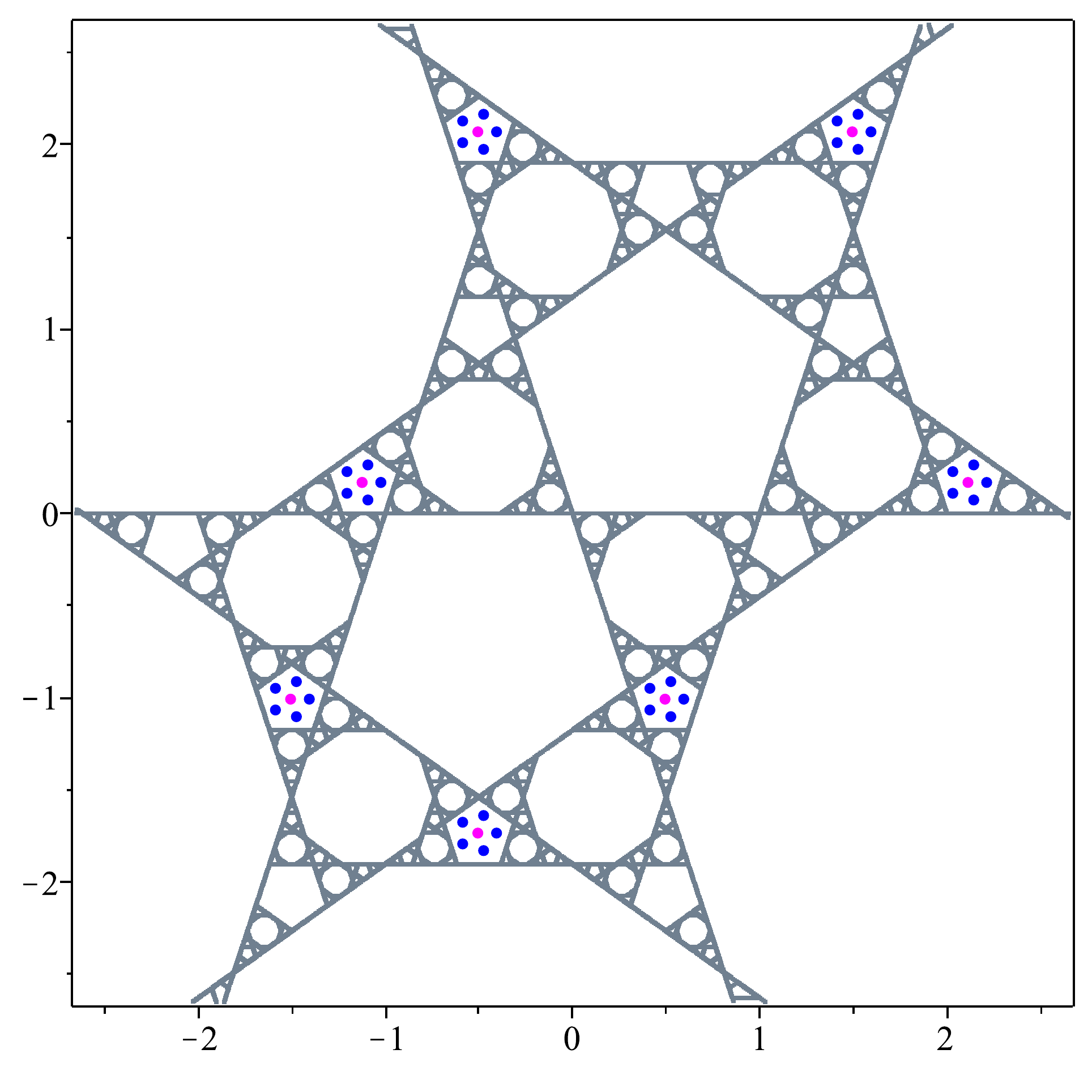}}
\caption{The $7$-periodic orbit associate with $P_1=r(P_0)$ in Magenta. All the points in the corresponding pentagons are $35$-periodic. In blue a $35$-periodic orbit.}\label{f:6}
\end{figure}
Also, it seems that ${\mathrm{period}(P_{n+1})}/{\mathrm{period}(P_{n})}\to 6$. The existence of this sequence of points implies the existence of nested connected components of $\mathcal{U}$ in bounded regions, filled by periodic points whose periods seem to increase indefinitely. This allows to find segments in $\mathcal{F}$ belonging to the boundaries of these connected components of  $\mathcal{U}$, filled by periodic points whose periods also seem to increase indefinitely.

\begin{figure}[H]
\centerline{\includegraphics[scale=0.35]{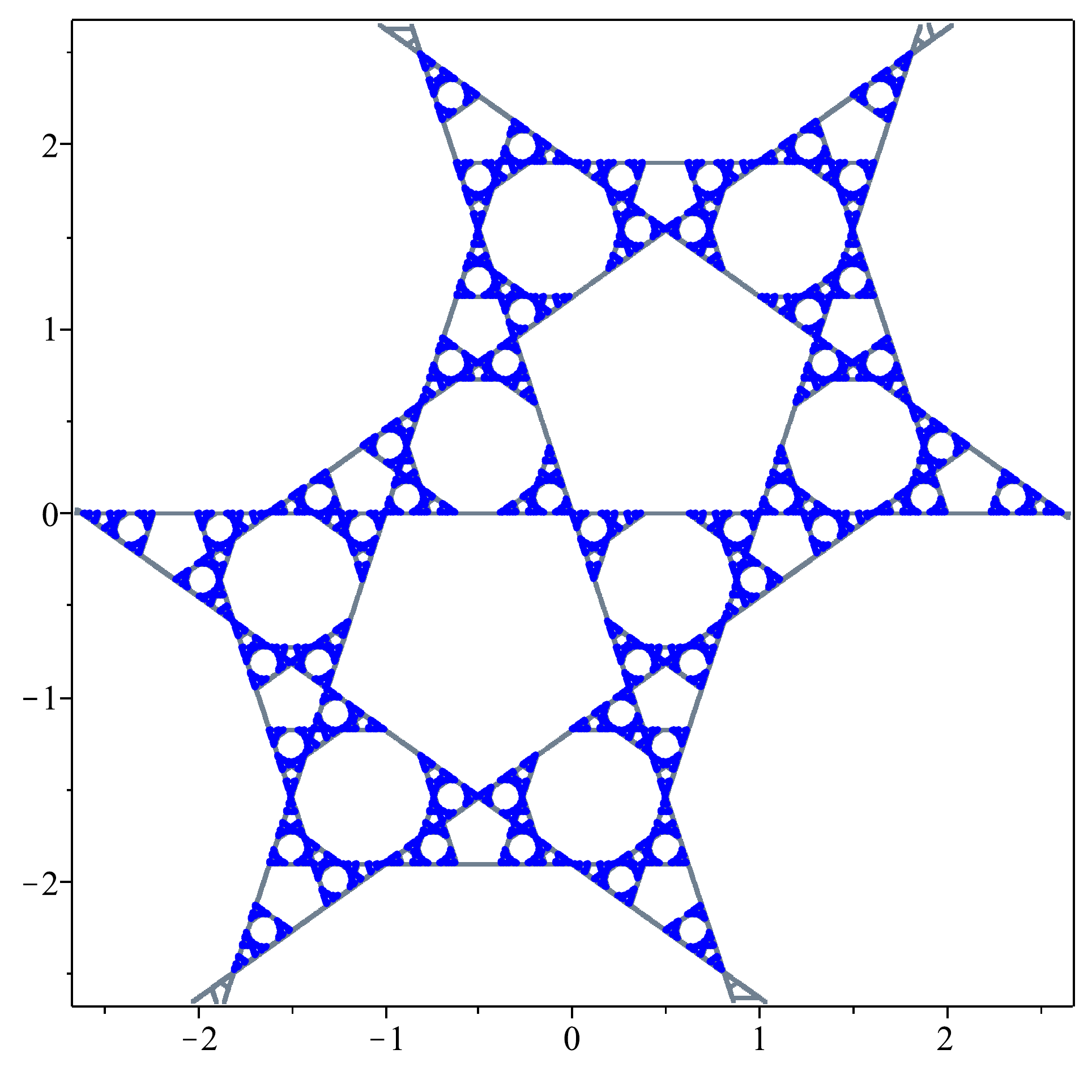}}
\caption{Some iterates of the orbit of the point  $Q=(-\varphi,0)\in\mathcal{F}$ in blue.}\label{f:7}
\end{figure}

It is also still open the possibility of the existence of aperiodic points in $\mathcal{F}$. In this sense, observe that the sequence of points $\{P_n\}_n$ converges to the point $Q=(-\varphi,0)\in\mathcal{F}$. This point has been identified as a possible aperiodic point in \cite{ChaCheYeh2019}. In Figure \ref{f:7}, the orbit of $Q$ is displayed.

We notice that we could compute, analytically, a very large number of iterates of $Q$ without finding a period. This is because, for this case $\lambda=\exp(i 8\pi/5)=\exp(-i 2\pi/5)=(\varphi-1)/2-i\,\sqrt{\varphi+2}/2$. Hence, 
by using the relation $\varphi^2=\varphi+1$, and setting $Q=-\varphi\in\mathbb{C}$, we obtain that:
$$
Q_n=F_\lambda^n(Q)=a_n+b_n\varphi+i\,\left(c_n+d_n\varphi\right)\sqrt{2+\varphi},$$ with $a_n,b_n,c_n,d_n\in\mathbb{Q}$, hence we could work with rational arithmetic. For instance, the first 10 iterates are: 
\begin{align*}
&Q_0=-\varphi,\,
Q_1= -\frac{\varphi}{2}+i\,\left(\frac{1}{2}+\frac{\varphi}{2}\right)\sqrt{2+\varphi} ,\,
Q_2= 1+\frac{\varphi}{2}+i\,\left(\frac{1}{2}+\frac{\varphi}{2}\right)\sqrt{2+\varphi},
Q_3= 1+\varphi,\\
&Q_4=\frac{1}{2}-i\,\frac{ \varphi  }{2}\sqrt{2+\varphi},\,
Q_5= -1-i\, \sqrt{2+\varphi},\,
Q_6= -1-\frac{\varphi}{2}+i\, \left(\frac{1}{2}-\frac{\varphi}{2}\right)\sqrt{2+\varphi},\\
&Q_7=-\frac{\varphi}{2}+i\, \left(\frac{ \varphi}{2}-\frac{1}{2}\right)\sqrt{2+\varphi},\,
Q_8= i\,  \sqrt{2+\varphi},\,
Q_9=\frac{3}{2}+i\,\frac{\varphi}{2}\,\sqrt{2+\varphi},\,
Q_{10}=\varphi.
\end{align*}

Interestingly, the points of the sequence that return to the critical line seem to have the form $a_n+b_n\varphi$ where $a_n, b_n\in\mathbb{Z}$. For instance the returns in the first $220$ iterates are: 
$Q_0=-\varphi,$ 
$Q_3= 1+\varphi,$ $Q_{10}=\varphi,$ 
$Q_{15}=-2 + \varphi,$ 
$Q_{38}=-3 + \varphi,$ 
$Q_{48}=-3 + 3 \varphi,$ 
$Q_{53}= -5 + 3 \varphi,$ 
$Q_{78}= -7 + 5 \varphi,$ 
$Q_{83}= -9 + 5 \varphi,$ 
$Q_{93}= -9 + 7\varphi,$ 
$Q_{220}=-10 + 7\varphi$.

\bigskip


\subsection*{Acknowledgements} The first, second and fourth authors are supported by
Ministry of Science and Innovation--State Research Agency of the
Spanish Government through grants PID2019-104658GB-I00  (first and
second authors) and MTM2017-86795-C3-1-P (fourth autor). They are also supported by the grant 2021-SGR-00113  from AGAUR. The second author is supported  by
grant Severo Ochoa and Mar\'{\i}a de Maeztu Program for Centers and Units of Excellence in R\&D (CEX2020-001084-M).
The third author acknowledges the group research recognition 2021-SGR-01039 from AGAUR. We thank our colleague Roser Guardia for the indications regarding the scale factor of the critical set that we mentioned in Section~\ref{s:evidences}.

\end{document}